\newtheorem{thm}{Theorem}[section]
\newtheorem{lem}[thm]{Lemma}
\newtheorem{prop}[thm]{Proposition}
\newtheorem{cor}[thm]{Corollary}
\theoremstyle{definition}
\theoremstyle{remark}
\newtheorem{rem}[thm]{Remark}
\DeclareMathOperator{\Aut}{Aut}
\DeclareMathOperator{\End}{End}
\DeclareMathOperator{\Imm}{Im}
\DeclareMathOperator{\Aug}{Aug}
\newcommand{\I}{\mathbb{I}}
\newcommand{\id}{\operatorname{id}}
\newcommand{\harub}{\mathbin{\overline{\ast}}}
\begin{document}

\title[Algebraic aspects of rooted tree maps]{Algebraic aspects of rooted tree maps}

\author{Hideki Murahara}
\address[Hideki Murahara]{Nakamura Gakuen University Graduate School, 5-7-1, Befu, Jonan-ku,
Fukuoka-city, 814-0198, Japan}
\email{hmurahara@nakamura-u.ac.jp}

\author{Tatsushi Tanaka}
\address[Tatsushi Tanaka]{Department of Mathematics, Faculty of Science, Kyoto Sangyo University, Motoyama, Kamigamo,
Kita-ku, Kyoto-city 603-8555, Japan}
\email{t.tanaka@cc.kyoto-su.ac.jp}

\subjclass[2010]{05C05, 16T05, 11M32}
\keywords{Connes--Kreimer Hopf algebra of rooted trees, Rooted tree maps, Harmonic products, Multiple zeta values}

\begin{abstract}
Based on the Connes--Kreimer Hopf algebra of rooted trees, the rooted tree maps are defined as linear maps on noncommutative polynomial algebra in two indeterminates. It is known that they induce a large class of linear relations for multiple zeta values. 
In this paper, we investigate some basic algebraic properties of rooted tree maps by relating to the harmonic algebra. 
We also characterize the antipode maps as the conjugation by the special map $\tau$. 

\end{abstract}

\maketitle

\section{Introduction} 
Let $\mathcal{H}$ be the Connes--Kreimer Hopf algebra of rooted trees introduced in \cite{CK98}. 
By assigning to $f \in\mathcal{H}$, the rooted tree map $\tilde{f}$ is introduced in \cite{Tan19} as an element in $\End(\mathcal{A})$, 
where $\mathcal{A}$ denotes the noncommutative polynomial algebra $\mathbb{Q}\langle x,y\rangle$. 
It is known that the rooted tree maps induce a large class of linear relations for multiple zeta values. 
In \cite{BT18,BT20}, 
we find some results in algebraic properties of rooted tree maps to make some applications to multiple zeta values clear. 
In this paper, we establish new algebraic formulas for rooted tree maps in the harmonic algebra. 

Our first theorem gives an explicit formula for rooted tree maps by using the product $\diamond$ (defined in \cite{HMO19}), which is a variation of the so-called harmonic product on $\mathcal{A}$. 
To state the formula, we need a polynomial $F_f\in\mathcal{A}$ determined by $f\in\mathcal{H}$, which we define in Section 3.   
\begin{thm} \label{main1}
 For any $f \in\mathcal{H}$ and $w\in\mathcal{A}$, we have
 \[
  \tilde{f}(wx)=(F_f\diamond w)x.
 \]
\end{thm}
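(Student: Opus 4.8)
The plan is to argue by induction on the number of vertices of $f$, using that $\mathcal{H}$ is generated as an algebra by rooted trees and that every tree arises from the single vertex $\fo$ by the grafting operator $B_+$. Two homomorphism properties organize the argument. Recall from \cite{Tan19} that $f\mapsto\tilde f$ is an algebra homomorphism $(\mathcal{H},\cdot)\to(\End(\mathcal{A}),\circ)$, so that $\widetilde{f_1f_2}=\tilde f_1\circ\tilde f_2$; and the polynomial $F_f$ of Section 3 is set up so that $f\mapsto F_f$ is an algebra homomorphism $(\mathcal{H},\cdot)\to(\mathcal{A},\diamond)$, whence $F_{f_1f_2}=F_{f_1}\diamond F_{f_2}$ and $F_1=1$. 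Since both sides of the asserted identity are linear in $f$, it suffices to treat forests; and combining these homomorphism properties with the associativity and unit of $\diamond$ from \cite{HMO19}, a forest factoring as $f_1f_2$ with both factors of positive degree reduces to its factors:
\begin{align*}
 \widetilde{f_1f_2}(wx)
 &=\tilde f_1\bigl(\tilde f_2(wx)\bigr)
  =\tilde f_1\bigl((F_{f_2}\diamond w)x\bigr)\\
 &=\bigl(F_{f_1}\diamond(F_{f_2}\diamond w)\bigr)x
  =\bigl((F_{f_1}\diamond F_{f_2})\diamond w\bigr)x
  =(F_{f_1f_2}\diamond w)x,
\end{align*}
where the inductive hypothesis is applied to each factor (the outer application using that it holds for \emph{all} $w$). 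Thus the problem reduces to the case of a single tree.

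For the base case $f=\fo$, I would verify the identity by direct computation: expand $\tilde{\fo}(wx)$ from the definition of the single-node rooted tree map, expand $F_{\fo}\diamond w$ from the recursive definition of $\diamond$ in \cite{HMO19}, and compare after right multiplication by $x$. Since $F_{\fo}$ is a weight-one element, this is a short check.

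The inductive step treats a single tree $t=B_+(f')$, where $f'$ is the forest obtained by deleting the root of $t$ and so has one fewer vertex. Here two recursions meet: the defining grafting recursion expressing $\widetilde{B_+(f')}$ in terms of $\tilde f'$, and the recursion of Section 3 expressing $F_{B_+(f')}$ in terms of $F_{f'}$ through the Hochschild cocycle $B_+$; denote the latter by $F_{B_+(f')}=\rho(F_{f'})$ for the relevant operator $\rho$ on $\mathcal{A}$. Granting the inductive hypothesis $\tilde f'(wx)=(F_{f'}\diamond w)x$, the step reduces to proving
\[
 \widetilde{B_+(f')}(wx)=\bigl(\rho(F_{f'})\diamond w\bigr)x
\]
for all $w$; that is, grafting a new root has the effect of replacing the left $\diamond$-factor $F_{f'}$ by $\rho(F_{f'})$.

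This last identity is the technical heart of the proof, and the step I expect to be the main obstacle. I would establish it by expanding $\widetilde{B_+(f')}(wx)$ via the grafting recursion, rewriting each resulting occurrence of $\tilde f'$ through the inductive hypothesis, carrying the terminal $x$ to the right throughout, and comparing with the recursive expansion of $\bigl(\rho(F_{f'})\diamond w\bigr)x$ read off from the definition of $\diamond$ in \cite{HMO19}. The two points demanding care are combinatorial: one must check that every term produced by grafting still ends in $x$, so that the right-factored form is preserved at each stage, and that the multiplicities coming from the grafting rule agree termwise with those in the defining recursion of $\diamond$. Once this is verified the induction closes, and Theorem \ref{main1} follows for all $f\in\mathcal{H}$.
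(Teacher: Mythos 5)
Your global skeleton agrees with the paper's: linearity reduces to forests, the case $f=gh$ is handled by exactly the two homomorphism properties you cite, and the problem does come down to a single tree $t=B_+(f')$. But the tree step, which you yourself flag as the main obstacle, contains a genuine gap, and the route you sketch for it would fail as stated. The grafting rule $\tilde{t}=L_yL_{x+2y}L_y^{-1}\tilde{f'}$ is only part of the definition of the rooted tree map: it prescribes $\tilde{t}(u)$ for the single letters $u\in\{x,y\}$, while on longer words $\tilde{t}$ is defined through rule (5), $\tilde{t}(uw)=\sum_{(t)}\tilde{t'}(u)\tilde{t''}(w)$, which brings in the full coproduct $\Delta(t)=\I\otimes t+(B_+\otimes\id)\Delta(f')$ --- that is, \emph{all} subtrees of $t$, not just $f'$. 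So ``expanding $\widetilde{B_+(f')}(wx)$ via the grafting recursion and rewriting each resulting occurrence of $\tilde{f'}$'' is not available: already for $w=xw'$ one gets $\tilde{t}(xw'x)=\sum_{(t)}\tilde{t'}(x)\tilde{t''}(w'x)$, a sum over coproduct components in which $\tilde{f'}$ never appears in isolation, and a termwise comparison of ``multiplicities from the grafting rule'' against the recursion of $\diamond$ has no mechanism for producing this coproduct sum.

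The missing idea, which is the actual heart of the paper's proof, is Proposition \ref{key}: for all $w_1,w_2\in\mathcal{A}$ and $f\in\mathcal{H}$,
\[
 w_1xw_2 \diamond F_f=\sum_{(f)}(F_{f'}\diamond w_1)\,x\,(F_{f''}\diamond w_2),
\]
i.e., the statement that $\diamond$-multiplication by $F_f$ against a word split at an occurrence of $x$ is governed precisely by $\Delta(f)$. Granting this, the tree case closes immediately: by induction and rule (5), $\tilde{t}(xw'x)=\sum_{(t)}F_{t'}\,x\,(F_{t''}\diamond w')x$, which Proposition \ref{key} identifies with $(F_t\diamond xw')x$; the remaining words are handled by writing $w$ in the letters $z=x+y$ and $x$ (rather than $x$ and $y$), so that $z$-prefixes pass through both sides by Lemmas \ref{z} and \ref{x+y}. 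Proposition \ref{key} is not a short bookkeeping check: the paper proves it by a separate double induction (on $\deg(f)$, then on $\deg(w_1)$) using Lemma \ref{y} for the base case, multiplicativity and coassociativity of $\Delta$ for products, and the Hochschild identity for $B_+$ in the tree case, with several nontrivial cancellations such as $y(L_y^{-1}F_g\diamond xw_2)=\sum_{(g),\,g'\ne\I}F_{g'}x(F_{g''}\diamond w_2)$. Without this compatibility statement between $\diamond$, $F$, and the coproduct, your induction cannot close, so as it stands the proposal identifies the right reduction but leaves the theorem's essential content unproved.
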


Our second theorem shows that another polynomial $G_{f}\in\mathcal{A}$ determined by $f\in\mathcal{H}$ gives a similar formula for $\widetilde{S(f)}\in\End(\mathcal{A})$, where $S$ denotes the antipode of $f$ (see Section 4 for the precise definition of $G_f$). 
\begin{thm} \label{main2}
 For any $f \in\mathcal{H}$ and $w\in\mathcal{A}$, we have
 \[
  \widetilde{S(f)}(wx)=(G_f\diamond w)x.
 \]
\end{thm}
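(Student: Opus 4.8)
The plan is to deduce Theorem~\ref{main2} from Theorem~\ref{main1} by applying the latter not to $f$ but to the element $S(f)$. Since the antipode $S$ is a (Hopf-)algebra endomorphism of $\mathcal{H}$, we have $S(f)\in\mathcal{H}$, so Theorem~\ref{main1} applies verbatim and gives
\[
  \widetilde{S(f)}(wx)=(F_{S(f)}\diamond w)x
\]
for every $w\in\mathcal{A}$. Comparing this with the identity we want to prove, the entire content of Theorem~\ref{main2} is thereby reduced to the purely combinatorial statement
\[
  G_f=F_{S(f)}\quad\text{in }\mathcal{A},
\]
relating the polynomial $G_f$ defined directly in Section~4 to the Section~3 assignment $F_{(\cdot)}$ evaluated at $S(f)$.

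To establish this identity I would argue by induction on the number of vertices of $f$, reducing by linearity to the case of a single rooted tree. The inductive engine is the recursive characterization of the antipode through the reduced coproduct $\overline{\Delta}(f)=\sum f'\otimes f''$, namely $S(f)=-f-\sum S(f')\,f''$, combined with the behaviour of $f\mapsto F_f$ under the product and coproduct of $\mathcal{H}$. Concretely, I expect $F$ to intertwine the Hopf structure with the $\diamond$-product, so that $F_{fg}$ (or, more usefully, the operator $w\mapsto F_f\diamond w$ under composition) is governed by $\overline{\Delta}$; feeding the antipode recursion into such a compatibility should generate exactly the alternating, cut-indexed sum that defines $G_f$ in Section~4.

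The main obstacle is precisely this compatibility step. Unlike the composition law $\widetilde{fg}=\tilde f\circ\tilde g$ for the rooted tree maps themselves, the operation $w\mapsto F_f\diamond w$ is not a priori multiplicative in $f$, since $\diamond$ is only a variant of the harmonic product and need not interact naively with the grafting and product on $\mathcal{H}$. I would therefore first isolate a clean lemma recording how $F_{fg}$ is controlled by the coproduct; this is where the $\diamond$-specific combinatorics from \cite{HMO19} must be reconciled with the tree combinatorics, and I expect it to be the technical heart of the argument. Once such a lemma is available, matching the signs and the cut-expansion on both sides to obtain $G_f=F_{S(f)}$ becomes a bookkeeping induction, and Theorem~\ref{main2} then follows immediately from the reduction above.
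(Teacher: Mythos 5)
Your reduction step is sound: Theorem~\ref{main1} applied to $S(f)\in\mathcal{H}$ does give $\widetilde{S(f)}(wx)=(F_{S(f)}\diamond w)x$, and taking $w=1$ shows Theorem~\ref{main2} is equivalent to the identity $G_f=F_{S(f)}$ --- which is exactly Corollary~\ref{cor} of the paper, read in the opposite logical direction. But you have mislocated the difficulty. The compatibility you single out as the ``technical heart'' --- how $F_{fg}$ interacts with the product on $\mathcal{H}$ --- is true \emph{by definition}: clause (4) of the definition of $F$ says $F_{gh}=F_g\diamond F_h$, and since $\mathcal{H}$ is commutative the antipode is an algebra homomorphism, so $F_{S(gh)}=F_{S(g)}\diamond F_{S(h)}=G_g\diamond G_h=G_{gh}$ follows inductively with no work. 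The multiplicative case is trivial, and everything reduces to single trees.

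The genuine gap is in the tree case, and it is not ``bookkeeping.'' For a tree $t$, Proposition~\ref{cop}(2) together with the definitional multiplicativity of $F$ gives $F_{S(t)}=-\sum_{t'\subset t}F_{t'}\diamond F_{S(t\setminus t')}=-\sum_{t'\subset t}F_{t'}\diamond G_{t\setminus t'}$ by your induction hypothesis, so the identity $G_t=F_{S(t)}$ you need is equivalent to the convolution identity
\[
 \sum_{(t)} F_{t'}\diamond G_{t''}=0,
\]
which is precisely Proposition~\ref{key_main2} of the paper. This is the deep input, and the paper spends essentially all of Section~4 proving it: the map $u$, the subalgebra $\mathcal{B}\subset\mathcal{A}^1_{\ast}\otimes\mathcal{A}^1_{\ast}$, the homomorphism property of Lemma~\ref{uuu}, the operators $p,q$ with Lemmas~\ref{Im} and~\ref{pqinA}, the induction of Proposition~\ref{abc12345}, and, decisively, the nontrivial harmonic-product identity \eqref{MA} of Ihara--Kajikawa--Ohno--Okuda and Kawashima, which is what forces the multiplication map $M$ to vanish on $\mathcal{B}$. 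Your proposal offers no substitute for this input, so the ``bookkeeping induction'' stalls exactly here; equivalently, you would need to show directly that $F_{S(\cdot)}$ satisfies the recursion $F_{S(B_+(g))}=R_{2x+y}F_{S(g)}$ defining $G$, which is the same statement in disguise. For comparison, the paper does not prove $G_f=F_{S(f)}$ first: it proves Theorem~\ref{main2} by induction at the operator level, writing $(G_t\diamond w)x=-\sum_{t'\subset t}\tilde{t'}\,\widetilde{S(t\setminus t')}(wx)$ via Theorem~\ref{main1} and Proposition~\ref{key_main2} and then invoking $\widetilde{S(t)}+\sum_{t'\subset t}\tilde{t'}\,\widetilde{S(t\setminus t')}=0$ from Proposition~\ref{cop}(2), and only afterwards extracts $G_f=F_{S(f)}$ as a corollary. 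Your rearrangement is legitimate in principle, but without a proof of Proposition~\ref{key_main2} or an equivalent it is not a proof.
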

\noindent
By theorems \ref{main1} and \ref{main2}, we have $(G_f\diamond w)x=\widetilde{S(f)}(wx)=(F_{S(f)}\diamond w)x$ for $w\in\mathcal{A}$. 
Thus we obtain
\begin{cor} \label{cor}
 For any $f \in\mathcal{H}$, we have
 \[
  G_{f}=F_{S(f)}.
 \]
\end{cor}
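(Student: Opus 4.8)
The plan is to read the identity straight off the two main theorems and then cancel. First, since $S$ is the antipode of the Hopf algebra $\mathcal{H}$, we have $S(f)\in\mathcal{H}$, so Theorem \ref{main1} applies verbatim with $f$ replaced by $S(f)$ and yields $\widetilde{S(f)}(wx)=(F_{S(f)}\diamond w)x$ for every $w\in\mathcal{A}$. On the other hand, Theorem \ref{main2} gives $\widetilde{S(f)}(wx)=(G_f\diamond w)x$. Equating the two expressions for the same element $\widetilde{S(f)}(wx)$ produces
\[
 (G_f\diamond w)x=(F_{S(f)}\diamond w)x \qquad (w\in\mathcal{A}).
\]

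Next I would cancel the trailing $x$. Right multiplication by $x$ is injective on $\mathcal{A}=\mathbb{Q}\langle x,y\rangle$: writing $a=\sum_w c_w w$ as a sum over words, we have $ax=\sum_w c_w(wx)$, and the words $wx$ are pairwise distinct, so $ax$ recovers every coefficient $c_w$. Hence $ax=bx$ forces $a=b$. Applying this with $a=G_f\diamond w$ and $b=F_{S(f)}\diamond w$ gives $G_f\diamond w=F_{S(f)}\diamond w$ for every $w\in\mathcal{A}$.

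Finally I would specialize $w$ to strip off the right factor and recover the bare polynomials. Since $\diamond$ is a variation of the harmonic product, the empty word $1\in\mathcal{A}$ is a unit for it, so $a\diamond 1=a$ for all $a\in\mathcal{A}$. Taking $w=1$ in the previous identity therefore yields $G_f=F_{S(f)}$, which is the assertion of the corollary.

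The argument is almost entirely formal, and the only step requiring genuine care is the last one, where I must confirm that $1$ really serves as a unit for $\diamond$ (equivalently, that the specialization $w=1$ is legitimate). If that unit property did not hold, one could not simply evaluate at $w=1$ and would instead have to argue that the family of operators $\{\,\cdot\,\diamond w : w\in\mathcal{A}\}$ separates its left argument. Given that $\diamond$ descends from the harmonic product, however, the unit property is available, so the specialization closes the proof at once.
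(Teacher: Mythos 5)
Your proposal is correct and follows essentially the same route as the paper: equate the two expressions for $\widetilde{S(f)}(wx)$ given by Theorems \ref{main1} (applied to $S(f)$) and \ref{main2}, then strip the trailing $x$ and set $w=1$. The extra details you supply (injectivity of right multiplication by $x$ on $\mathbb{Q}\langle x,y\rangle$, and the fact that $1$ is a unit for $\diamond$, which is explicitly part of the definition in \eqref{diamond}) are exactly the steps the paper leaves implicit, so there is no gap.
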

Let $\tau$ be the anti-automorphism on $\mathcal{A}$ with $\tau(x)=y$ and $\tau(y)=x$. 
This $\tau$ is an involution and known to give the dual index of multiple zeta values. 
Our third theorem asserts that $\tau$ plays another role to connect the rooted tree maps with their antipode maps. 
\begin{thm} \label{main3}
 For any $f \in\mathcal{H}$, we have
 \[
  \widetilde{S(f)}=\tau \tilde{f} \tau. 
 \]
\end{thm}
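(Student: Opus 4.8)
The plan is to verify the operator identity by comparing the two sides on a spanning set of $\mathcal{A}$, feeding in the explicit description of $\widetilde{S(f)}$ already available from Theorem~\ref{main2} and Corollary~\ref{cor}. Both $\widetilde{S(f)}$ and $\tau\tilde{f}\tau$ are $\mathbb{Q}$-linear, so it suffices to check $\widetilde{S(f)}(w)=\tau\tilde{f}\tau(w)$ for every word $w$. Setting $z=x+y$, I would first record that $\tau$ fixes $z$ and interchanges $x$ and $y$, so that $\mathcal{A}=\mathbb{Q}1\oplus\mathcal{A}x\oplus\mathcal{A}z$. The key preliminary is the (two-sided) $z$-equivariance of rooted tree maps, $\tilde{g}(wz)=\tilde{g}(w)z$ and $\tilde{g}(zw)=z\tilde{g}(w)$ for all $g\in\mathcal{H}$, which I would extract from the construction (the one-vertex map is the derivation with $x\mapsto xy$, $y\mapsto-xy$, hence it kills $z$, and the remaining maps are built from it). Since $\tau(z)=z$, this forces both $\widetilde{S(f)}$ and $\tau\tilde{f}\tau$ to commute with right multiplication by $z$; an induction on word length then reduces the whole identity to the cases $w=1$ and $w\in\mathcal{A}x$.

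The empty word is immediate: $\tilde{g}(1)=\varepsilon(g)1$ gives $\widetilde{S(f)}(1)=\varepsilon(S(f))1=\varepsilon(f)1=\tau\tilde{f}\tau(1)$ because $\varepsilon\circ S=\varepsilon$. For words ending in $x$, Theorem~\ref{main2} together with Corollary~\ref{cor} yields the clean right-hand side $\widetilde{S(f)}(wx)=(F_{S(f)}\diamond w)x$, whereas for the left-hand side, using that $\tau$ is an anti-automorphism,
\[
 \tau\tilde{f}\tau(wx)=\tau\bigl(\tilde{f}(y\,\tau(w))\bigr).
\]
So the theorem comes down to the single identity $\tau(\tilde{f}(y\,\tau(w)))=(F_{S(f)}\diamond w)x$ for all $w$.

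The hard part is exactly this last step, and it is where the asymmetry of the available tools bites: Theorems~\ref{main1} and \ref{main2} describe $\tilde{f}$ through its \emph{last} letter $x$, while $\tau$-conjugation forces us to control $\tilde{f}$ on words \emph{beginning} with $y$. My plan is to trade one for the other. Left $z$-equivariance lets me peel the leading letter, $\tilde{f}(yu)=z\tilde{f}(u)-\tilde{f}(xu)$, turning the problem into an induction on length that runs in parallel with the reduction of the first paragraph; and the passage from $\tilde{f}$ to $\widetilde{S(f)}$ should be produced by transporting the defining convolution identity of the antipode, $\sum S(f_{(1)})f_{(2)}=\varepsilon(f)1$, into $\End(\mathcal{A})$ via the algebra homomorphism $g\mapsto\tilde{g}$. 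The genuinely computational obstacle is to show that $\tau$ intertwines the left and right harmonic structures underlying $\diamond$, i.e.\ that conjugating the $\diamond$-formula for $\tilde{f}$ by $\tau$ reproduces the $\diamond$-formula for $\widetilde{S(f)}$; once that compatibility of $\tau$ with $\diamond$ is established, the two sides match term by term. As a guiding check, for the one-vertex tree one has $S(\bullet)=-\bullet$, and both $\widetilde{S(\bullet)}\,(=-\tilde{\bullet})$ and $\tau\tilde{\bullet}\tau$ are derivations agreeing on $x$ and $y$, which already exhibits the mechanism in miniature.
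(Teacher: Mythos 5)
Your reduction skeleton is sound and close to the paper's: both arguments use Lemma \ref{z} together with $\tau(z)=z$ to peel off letters $z$, and both feed in Theorem \ref{main2} with Corollary \ref{cor} to get $\widetilde{S(f)}(wx)=(F_{S(f)}\diamond w)x$. But your proposal has a genuine gap exactly at the step you defer: the identity $\tau\bigl(\tilde{f}(y\,\tau(w))\bigr)=(F_{S(f)}\diamond w)x$ \emph{is} the theorem, and neither of the mechanisms you sketch produces it. Peeling the leading letter via $\tilde{f}(yu)=z\tilde{f}(u)-\tilde{f}(xu)$ does not close an induction, because $\tilde{f}(xu)$ is exactly as inaccessible as $\tilde{f}(yu)$: Theorems \ref{main1} and \ref{main2} compute $\tilde{f}$ and $\widetilde{S(f)}$ only on words \emph{ending} in $x$, and give no handle on the first letter of the argument. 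Likewise, transporting the convolution identity $\sum_{(f)}S(f')f''=\varepsilon(f)\I$ (with $\varepsilon$ the counit) through the algebra homomorphism $f\mapsto\tilde{f}$ only yields $\sum_{(f)}\widetilde{S(f')}\,\tilde{f''}=\varepsilon(f)\,\id$, information already fully consumed in the proof of Theorem \ref{main2} via Proposition \ref{cop}(2), and containing no trace of $\tau$. So your sentence ``once that compatibility of $\tau$ with $\diamond$ is established, the two sides match term by term'' is not bookkeeping; it is the missing theorem.

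What the paper supplies at this point is two concrete statements you would have to prove. First, Lemma \ref{taurel}: $(yw_1\diamond yw_2)x+y\tau(y\tau(w_1)\diamond y\tau(w_2))=0$, equivalently that $-R_x^{-1}\tau R_x$ is a homomorphism for $\diamond$; this is verified by conjugating with the involution $\phi$, under which the map becomes $d\rho\sigma$, a composition of $\ast$-homomorphisms (Muneta's Lemma \ref{muneta} enters here), so the $\tau$--$\diamond$ compatibility genuinely uses the harmonic-algebra machinery, not just the recursion \eqref{diamond}. Second, Proposition \ref{aaaaaaa}: $F_f=-y\tau L_y^{-1}F_{S(f)}$ for $f\in\Aug$, proved by induction over the recursive construction of $F_f$, with Lemma \ref{taurel} handling the product step. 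With these, the computation for $w=yw'x$ runs $\tau\tilde{f}(y\tau(w')x)=\tau((F_f\diamond y\tau(w'))x)=-\tau((y\tau L_y^{-1}(F_{S(f)})\diamond y\tau(w'))x)=(F_{S(f)}\diamond yw')x$. There is also an organizational flaw in your reduction: the splitting $\mathcal{A}=\mathbb{Q}1\oplus\mathcal{A}x\oplus\mathcal{A}z$ pins only the last letter, but after conjugation $\tau(wx)=y\tau(w)$ need not end in $x$, so Theorem \ref{main1} is not yet applicable; you must additionally split the \emph{first} letter of $w$ into $y$ versus $z$, which is why the paper treats $y\mathcal{A}x$, then $z\mathcal{A}x$, then $\mathcal{A}z$. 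That part is reparable, but the intertwining statements are unavoidable in every branch --- even the base case $w=x$ already amounts to $F_{S(f)}x=-y\tau(F_f)$, i.e.\ to Proposition \ref{aaaaaaa}.
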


In Section 2, we discuss definitions and some basic properties of the Connes--Kreimer Hopf algebra of rooted trees, rooted tree maps, and harmonic products. 
Sections 3--5 are devoted to proofs of theorems \ref{main1}, \ref{main2}, and \ref{main3} in turn.

\section{Preliminaries}
\subsection{Connes--Kreimer Hopf algebra of rooted trees}
We review briefly the Connes--Kreimer Hopf algebra of rooted trees introduced in \cite{CK98}. 
A tree is a finite and connected graph without cycles and a rooted tree is a tree in which one vertex is designated as the root. 
We consider the rooted trees without plane structure, e.g., 
$\,\begin{xy}
   {(0,0) \ar @{{*}-{*}} (2,3)}, 
   {(2,3) \ar @{{*}-{*}} (4,0)},
   {(0,0) \ar @{{*}-{*}} (0,-3)}
\end{xy}\,\,
=\,\begin{xy}
   {(0,0) \ar @{{*}-{*}} (2,3)}, 
   {(2,3) \ar @{{*}-{*}} (4,0)},
   {(4,0) \ar @{{*}-{*}} (4,-3)},
\end{xy}\,\,$, 
where the topmost vertex represents the root.
A (rooted) forest is a finite collection of rooted trees $(t_1,\dots, t_n)$, which we denote the commutative product by $t_1\cdots t_n$. 
Then the Connes--Kreimer Hopf algebra of rooted trees $\mathcal{H}$ is the $\mathbb{Q}$-vector space freely generated by rooted forests with the commutative ring structure. 
We denote by $\I$ the empty forest, which is regarded as the neutral element in $\mathcal{H}$.

We define the linear map $B_+$ on $\mathcal{H}$ by grafting all roots of trees in a forest on a common new root and 
$B_+(\I)=  
 \,\,\begin{xy}
   {(0,0) \ar @{{*}-{*}} (0,0)}
 \end{xy}\,\,
$. 
We find that, for a rooted tree $t (\neq \I)$, there is a unique forest $f$ such that $t=B_+(f)$.
The coproduct $\Delta$ on $\mathcal{H}$ is defined by the following two rules. 
\begin{enumerate}
 \item $\Delta(t) = \I \otimes t + (B_{+} \otimes \id) \circ \Delta(f)$ if $t = B_+(f)$ is a tree,  
 \item $\Delta(f) = \Delta(g)\Delta(h)$ if $f=g h$ with $g,h \in \mathcal{H}$. 
\end{enumerate}
Note that components of the tensor product are reversely defined compared to those in \cite{CK98}. 
We denote by $S$ the antipode of $\mathcal{H}$. 
In the sequel, we often employ the Sweedler notation $\Delta(f)=\sum_{(f)} f' \otimes f''$. 

A subtree $t'$ of the rooted tree $t$ (denoted by $t'\subset t$) is a subgraph of $t$ that is connected and contains the root of $t$ (hence the empty tree $\I$ cannot be a subtree in our sense), and 
we denote by $t \setminus t'$ their subtraction.
For example, we have 
$t \setminus t'=
  \,\,\begin{xy}
   {(0,0) \ar @{{*}-{*}} (0,0)},
   {(4,-1.5) \ar @{{*}-{*}} (4,1.5)}
  \end{xy}\,\,$
if
$t=\,\begin{xy}
   {(0,0) \ar @{{*}-{*}} (2,3)}, 
   {(2,3) \ar @{{*}-{*}} (4,0)},
   {(4,0) \ar @{{*}-{*}} (4,-3)}
  \end{xy}\,\,$
and
$t'=  
  \,\begin{xy}
   {(0,0) \ar @{{*}-{*}} (0,0)}
  \end{xy}\,\,$.
\begin{prop}[{\cite{CK98}}] \label{cop}
 For a rooted tree $t$, we have 
 \begin{itemize}
   \item[(1)] $\displaystyle{\Delta(t)=\I \otimes t +\sum_{t' \subset t} t' \otimes (t \setminus t')}$,
   \item[(2)] $\displaystyle{S(t)+\sum_{t'\subset t} t' S(t \setminus t')=0}$.
 \end{itemize}
\end{prop}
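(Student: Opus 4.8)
The plan is to prove (1) by induction on the number of vertices of $t$ using the two recursive rules for $\Delta$, and then to read off (2) from (1) together with the defining property of the antipode.

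For the base case of (1), take $t=B_+(\I)=\fo$. Rule (1) for $\Delta$ gives $\Delta(\fo)=\I\otimes\fo+(B_+\otimes\id)\Delta(\I)=\I\otimes\fo+\fo\otimes\I$, using $\Delta(\I)=\I\otimes\I$ and $B_+(\I)=\fo$. This matches the right-hand side, since the only subtree of $\fo$ is $\fo$ itself and $\fo\setminus\fo=\I$. For the inductive step, write $t=B_+(f)$ with $f=t_1\cdots t_n$. By rule (2), $\Delta(f)=\prod_{i=1}^n\Delta(t_i)$, and by the inductive hypothesis each factor may be written as
\[
  \Delta(t_i)=\sum_{t_i'} t_i'\otimes(t_i\setminus t_i'),
\]
where $t_i'$ runs over all subtrees of $t_i$ together with the empty tree $\I$, with the convention that the term $t_i'=\I$ is the summand $\I\otimes t_i$ (so that $t_i\setminus\I=t_i$). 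Expanding the product over $i$ and then applying $B_+\otimes\id$ converts each tuple $(t_1',\dots,t_n')$ into the single summand $B_+(t_1'\cdots t_n')\otimes\prod_{i=1}^n(t_i\setminus t_i')$.

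The crux of the argument, and the step I expect to be the main obstacle, is the combinatorial identification underlying this expansion. I would check that grafting the chosen (possibly empty) subtrees onto the new root sets up a bijection between tuples $(t_1',\dots,t_n')$ and subtrees $t'\subset t$ containing the root of $t$, via $t'=B_+(t_1'\cdots t_n')$, and that under this bijection the pruned forest satisfies $\prod_{i=1}^n(t_i\setminus t_i')=t\setminus t'$. The care here lies in the empty-subtree cases: the all-empty choice $t_1'=\cdots=t_n'=\I$ yields the single-vertex subtree $t'=\fo$ with $t\setminus t'=f$, while a branch with $t_i'=\I$ contributes the whole $t_i$ to the pruned forest and a branch with $t_i'\neq\I$ contributes the forest $t_i\setminus t_i'$. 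Granting this bijection, $(B_+\otimes\id)\Delta(f)=\sum_{t'\subset t} t'\otimes(t\setminus t')$, and adding the leading term $\I\otimes t$ supplied by rule (1) gives the formula in (1).

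For (2), I would use the antipode identity in the Sweedler form $\sum_{(t)} t'\,S(t'')=\epsilon(t)\,\I$. A tree $t$ has positive degree, so $\epsilon(t)=0$ and hence $\sum_{(t)} t'\,S(t'')=0$. Substituting the formula from (1) and multiplying out, the leading term $\I\otimes t$ contributes $\I\cdot S(t)=S(t)$ and each summand $t'\otimes(t\setminus t')$ contributes $t'\,S(t\setminus t')$, giving
\[
  S(t)+\sum_{t'\subset t} t'\,S(t\setminus t')=0.
\]
This final step is purely formal once (1) is established.
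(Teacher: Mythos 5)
Your proof is correct. Note that the paper itself gives no proof of Proposition \ref{cop}: it is imported from \cite{CK98} as a known fact, so there is no argument of the authors to compare yours against; what you have written is essentially the standard Connes--Kreimer argument. Your combinatorial core is right: for $t=B_+(t_1\cdots t_n)$, a connected subgraph $t'$ of $t$ containing the root meets each branch $t_i$ in either nothing or a connected subgraph containing the root of $t_i$ (since any vertex of $t'$ in $t_i$ is joined to the root of $t$ by a path through the root of $t_i$, and a connected subgraph of a tree contains the path between any two of its vertices), which gives your bijection $(t_1',\dots,t_n')\mapsto B_+(t_1'\cdots t_n')$ and the identity $t\setminus t'=\prod_i(t_i\setminus t_i')$ with the convention $t_i\setminus\I=t_i$. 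One small point worth making explicit: the bijection is with subtrees of $t$ \emph{as subgraphs}, so when two branches are isomorphic the same abstract tree $t'$ occurs with multiplicity on both sides, consistent with the interpretation of $\sum_{t'\subset t}$ in the paper (e.g.\ for the cherry $B_+(\fo\,\fo)$ the edge appears twice). Your step (2) is also fine: since $\mathcal{H}$ is a graded connected bialgebra the antipode is the two-sided convolution inverse of the identity, so $\sum_{(t)}t'\,S(t'')=\epsilon(t)\,\I=0$ for a tree $t$, and substituting (1) yields $S(t)+\sum_{t'\subset t}t'\,S(t\setminus t')=0$ exactly as you say.
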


\subsection{Rooted tree maps}
We here define rooted tree maps introduced in \cite{Tan19}.
For $u \in\mathcal{A}$, let $L_u$ and $R_u$ be $\mathbb{Q}$-linear maps on $\mathcal{A}$ defined by $L_u(w)=uw$ and $R_u(w)=wu \,\, (w\in \mathcal{A})$.
For $f\in \mathcal{H}$, we define the $\mathbb{Q}$-linear map $\tilde{f}\colon\mathcal{A}\to\mathcal{A}$, which we call the rooted tree map (RTM for short), recursively by 
\begin{enumerate}
 \item $\tilde{\I}=\id$, 
 \item $\tilde{f}(x)=yx$ and $\tilde{f}(y)=-yx$\, if 
 $f=
 \,\,\begin{xy}
  {(0,0) \ar @{{*}-{*}} (0,0)}
 \end{xy}\,\,
 $,
 \item $\tilde{t}(u)=L_y L_{x+2y}L_{y}^{-1} \tilde{f}(u)$ if $t = B_+(f)$ is a tree, 
 \item $\tilde{f}(u)=\tilde{g}(\tilde{h}(u))$\, if $f = gh$, 
 \item $\tilde{f}(uw)=\sum_{(f)} \tilde{f'}(u) \tilde{f''}(w)$\, for $\Delta(f)=\sum_{(f)} f' \otimes f''$, 
\end{enumerate}
where $w \in \mathcal{A}$ and $u \in \{x,y\}$.  
It is known that $\;{\widetilde{}}\;\colon \mathcal{H}\to\End(\mathcal{A})$ is an algebra homomorphism. 
We sometimes denote its image by $\widetilde{\mathcal{H}}$.
(Note that in this definition the order of the concatenation product on $\mathcal{A}$ is treated reversely compared to that in \cite{Tan19}. 
Since the coproduct $\Delta$ on $\mathcal{H}$ is also defined reversely as above, this definition makes sense.)

Let $z=x+y$. 
It is known that RTMs commute with each other and with $L_z$ and $R_z$. 
\begin{lem}[{\cite{Tan19}}] \label{z}
 For $f\in\mathcal{H}$ and $w\in\mathcal{A}$, we have $\tilde{f}(zw)=z\tilde{f}(w)$ and $\tilde{f}(wz)=\tilde{f}(w)z$.
\end{lem}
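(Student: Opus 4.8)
The plan is to reduce both identities to the single sub-lemma that $\tilde{g}(z)=0$ for every nonempty forest $g$ (equivalently $\tilde{g}(z)=0$ whenever $g$ lies in the augmentation ideal, while $\tilde{\I}(z)=z$), and then to read off the result from the shape of the Connes--Kreimer coproduct. I would prove the sub-lemma by induction on the number of vertices of $g$, following the recursive definition of RTMs. The base case is the single vertex, where, writing $f=\fo$, the crucial cancellation $\tilde{f}(z)=\tilde{f}(x)+\tilde{f}(y)=yx+(-yx)=0$ occurs. For a product $g=g_1g_2$ of two nonempty forests, rule (4) gives $\tilde{g}(z)=\tilde{g_1}(\tilde{g_2}(z))=\tilde{g_1}(0)=0$ by the inductive hypothesis and linearity. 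For a tree $g=B_+(h)$ with $h\neq\I$, rule (3) and linearity give $\tilde{g}(z)=L_yL_{x+2y}L_y^{-1}\bigl(\tilde{h}(x)+\tilde{h}(y)\bigr)=L_yL_{x+2y}L_y^{-1}\tilde{h}(z)=0$, again by the inductive hypothesis.

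For the first identity $\tilde{f}(zw)=z\tilde{f}(w)$ no induction on $w$ is needed. Writing $z=x+y$ and applying rule (5) to $\tilde{f}(xw)$ and $\tilde{f}(yw)$ separately, I obtain $\tilde{f}(zw)=\sum_{(f)}\tilde{f'}(z)\tilde{f''}(w)$. The point is that the only term of $\Delta(f)$ whose left tensor factor equals $\I$ is $\I\otimes f$ (for a tree this is Proposition \ref{cop}(1); the general forest case follows since $\Delta$ is multiplicative and a product of forests equals $\I$ only when every factor does). Hence, by the sub-lemma, every summand with $f'\neq\I$ vanishes and only $\tilde{\I}(z)\tilde{f}(w)=z\tilde{f}(w)$ survives.

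For the second identity $\tilde{f}(wz)=\tilde{f}(w)z$, the asymmetry of rule (5), which peels off only the first letter of a word, forces an induction on the length of $w$. The base case $w\in\{x,y\}$ is handled as above but now using that $f\otimes\I$ is the unique term of $\Delta(f)$ with right tensor factor $\I$, so that $\tilde{f}(uz)=\sum_{(f)}\tilde{f'}(u)\tilde{f''}(z)=\tilde{f}(u)z$ by the sub-lemma. For the inductive step I write $w=uw'$ with $u\in\{x,y\}$, apply rule (5) to $\tilde{f}\bigl(u(w'z)\bigr)$, and push $z$ to the right inside each factor via $\tilde{f''}(w'z)=\tilde{f''}(w')z$ (the inductive hypothesis), giving $\tilde{f}(wz)=\bigl(\sum_{(f)}\tilde{f'}(u)\tilde{f''}(w')\bigr)z=\tilde{f}(w)z$.

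The step I expect to be the main obstacle is the tree case of the sub-lemma: to apply $L_y^{-1}$ and to split it across the sum $\tilde{h}(x)+\tilde{h}(y)$ one must know that $\tilde{h}(x)$ and $\tilde{h}(y)$ both lie in $y\mathcal{A}$, which is precisely the well-definedness of rule (3) and deserves to be recorded explicitly. The second delicate point is purely organizational: isolating the unique $\I$-term on the correct side of $\Delta(f)$ is what distinguishes the two identities, and it is the reason the right-hand identity needs the extra induction on word length while the left-hand one does not.
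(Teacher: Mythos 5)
The paper itself offers no proof of this lemma---it is imported wholesale from \cite{Tan19}---and your argument is correct and essentially the standard one from that source: reduce both identities to the sub-lemma $\tilde{g}(z)=0$ for nonempty forests $g$ (via the cancellation $yx-yx=0$ at a single vertex and the three recursion rules), then use that $\I\otimes f$, resp.\ $f\otimes\I$, is the unique term of $\Delta(f)$ with empty left, resp.\ right, tensor factor, which is just the counit axiom and correctly explains why $L_z$ needs no induction on $w$ while $R_z$ does. The only boundary you leave untreated is $w=1$, where both identities reduce to $\tilde{f}(z)=z\tilde{f}(1)=\tilde{f}(1)z$ and one additionally needs $\tilde{f}(1)=0$ for nonempty $f$ (i.e.\ $\tilde{f}(1)=\varepsilon(f)$, which is forced by consistency of rule (5), e.g.\ applying it to $\tilde{f}(u\cdot 1)$ for $f$ a single vertex); together with your sub-lemma this disposes of that case, and the point you flag yourself---that $\tilde{h}(x),\tilde{h}(y)\in y\mathcal{A}$ so that $L_y^{-1}$ is legitimately applied---is indeed the only other fact that must be supplied, by the same induction.
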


\subsection{Harmonic products}
Let $\mathcal{A}^1=\mathbb{Q}+y\mathcal{A}$ be the subalgebra of $\mathcal{A}$. 
We define the $\mathbb{Q}$-bilinear product $\ast$ on $\mathcal{A}^1$, which is called the harmonic product, by
\begin{align*} 
 \begin{split}
 w \ast 1&= 1\ast w=w, \\
 yx^{k_1-1}\cdots yx^{k_r-1} \ast yx^{l_1-1}\cdots yx^{l_s-1} 
 &= yx^{k_1-1} (yx^{k_2-1}\cdots yx^{k_r-1} \ast yx^{l_1-1}\cdots yx^{l_s-1}) \\
  &\quad +yx^{l_1-1} (yx^{k_1-1}\cdots yx^{k_r-1} \ast yx^{l_2-1}\cdots yx^{l_s-1}) \\
  &\quad +yx^{k_1+l_1-1} (yx^{k_2-1}\cdots yx^{k_r-1} \ast yx^{l_2-1}\cdots yx^{l_s-1}).
 \end{split}
\end{align*}
It is known that this product is commutative and associative, and has one of the product structures of ordinary multiple zeta values (see \cite{Hof97}). 
There are many results on the harmonic products. 
We here recall the following identity (see \cite[Proposition 6]{IKOO11} or \cite[Proposition 7.1]{Kaw09}). 
For $yx^{k_1-1}\cdots yx^{k_r-1}\in\mathcal{A}^1$, we have
\begin{align} \label{MA} 
 \sum_{i=0}^{r} (-1)^{i} yx^{k_1-1}\cdots yx^{k_i-1} \ast yx^{k_r-1}zx^{k_{r-1}-1}\cdots zx^{k_{i+1}-1} =0. 
\end{align}

Next, we define the $\mathbb{Q}$-bilinear product $\harub$ on $\mathcal{A}^1$ by
\begin{align*}
 w \harub 1&= 1\harub w=w, \\
 yx^{k_1-1}\cdots yx^{k_r-1} \harub yx^{l_1-1}\cdots yx^{l_s-1} 
 &= yx^{k_1-1} (yx^{k_2-1}\cdots yx^{k_r-1} \harub yx^{l_1-1}\cdots yx^{l_s-1}) \\
  &\quad +yx^{l_1-1} (yx^{k_1-1}\cdots yx^{k_r-1} \harub yx^{l_2-1}\cdots yx^{l_s-1}) \\
  &\quad -yx^{k_1+l_1-1} (yx^{k_2-1}\cdots yx^{k_r-1} \harub yx^{l_2-1}\cdots yx^{l_s-1}).
\end{align*}
Let $d_1$ be the automorphism on $\mathcal{A}$ given by $d_1(x)=x$ and $d_1(y)=z$.  
We define the $\mathbb{Q}$-linear map $d\colon\mathcal{A}^1\to \mathcal{A}^1$ by $d(1)=1$ and $d(yw)=yd_1(w)$. 
The map $d$ intermediates between the two products in the following sense. 
\begin{lem}[{\cite{Mun09}}] \label{muneta}
 For $w_1,w_2\in\mathcal{A}^1$, we have
 \[
  d(w_1 \harub w_2)
  =d(w_1) \ast d(w_2). 
 \]
\end{lem}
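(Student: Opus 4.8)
The plan is to prove the stronger structural statement that $d$ is an algebra homomorphism from $(\mathcal{A}^1,\harub)$ to $(\mathcal{A}^1,\ast)$, by induction on the total \emph{depth}, that is, on the total number of occurrences of $y$ in the two arguments. Since $\harub$ and $\ast$ are $\mathbb{Q}$-bilinear and $d$ is $\mathbb{Q}$-linear, it suffices to treat monomials $w_1=yx^{k_1-1}\cdots yx^{k_r-1}$ and $w_2=yx^{l_1-1}\cdots yx^{l_s-1}$. If $r=0$ or $s=0$ then one argument equals $1$ and both sides collapse to $d(w_1)\ast d(w_2)$; this is the base case.

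For the inductive step I would first record the commutation rule that, for any $v\in\mathcal{A}^1$ and $k\ge1$,
\[ d\bigl(yx^{k-1}v\bigr)=yx^{k-1}\,d_1(v), \]
which follows at once from $d(yw)=yd_1(w)$ together with $d_1(x)=x$. Applying $d$ to the defining recursion of $\harub$ for $w_1\harub w_2$ and using this rule on each of the three leading blocks $yx^{k_1-1}$, $yx^{l_1-1}$, $yx^{k_1+l_1-1}$ pushes the outer $d$ inward and replaces it by $d_1$ acting on the three shorter products $P\harub w_2$, $w_1\harub Q$, and $P\harub Q$, where $P=yx^{k_2-1}\cdots yx^{k_r-1}$ and $Q=yx^{l_2-1}\cdots yx^{l_s-1}$.

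At this point the induction hypothesis governs $d$ of the shorter products, whereas what has surfaced is $d_1$. To bridge the two I would use that on positive-depth words $d_1(yw)=zd_1(w)=(x+y)d_1(w)$, so that $d_1$ differs from $d$ precisely by turning a leading $y$ into $z=x+y$; expanding every such $z$ via $z=x+y$ identifies $d$ with the map sending $yx^{k_1-1}\cdots yx^{k_r-1}$ to the sum over all ways of merging consecutive blocks $yx^{k_i-1}\cdots yx^{k_j-1}$ into a single block $yx^{(k_i+\cdots+k_j)-1}$. The main obstacle is the ensuing bookkeeping: one must show that the extra summands created by writing $z=x+y$ in the contracted position convert the minus-sign contraction term $-yx^{k_1+l_1-1}(P\harub Q)$ of $\harub$ into exactly the plus-sign contraction term of $\ast$, so that the whole expression reassembles into the $\ast$-recursion for $d(w_1)\ast d(w_2)$. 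The phenomenon is already visible in depth $(1,1)$, where
\[ d\bigl(yx^{k-1}\harub yx^{l-1}\bigr)=yx^{k-1}zx^{l-1}+yx^{l-1}zx^{k-1}-yx^{k+l-1} \]
expands under $z=x+y$ to $yx^{k-1}yx^{l-1}+yx^{l-1}yx^{k-1}+yx^{k+l-1}=yx^{k-1}\ast yx^{l-1}$; the general case follows the same pattern. To keep the accounting manageable I would formulate it through the merge description of $d$ above within the quasi-shuffle formalism (cf.\ \cite{Hof97}), in which $\ast$ and $\harub$ are the quasi-shuffle products attached to the letter products $yx^{k-1}\cdot yx^{l-1}=\pm\, yx^{k+l-1}$ and $d$ is the canonical comparison map intertwining the two contraction signs.
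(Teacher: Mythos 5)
The paper itself offers no proof of this lemma --- it is quoted from \cite{Mun09} --- so the only question is whether your sketch is self-contained, and it is not: the decisive step is missing. Your reduction is sound up through pushing $d$ inside the $\harub$-recursion: with $P=yx^{k_2-1}\cdots yx^{k_r-1}$ and $Q=yx^{l_2-1}\cdots yx^{l_s-1}$ one gets
\[
 d(w_1\harub w_2)=yx^{k_1-1}d_1(P\harub w_2)+yx^{l_1-1}d_1(w_1\harub Q)-yx^{k_1+l_1-1}d_1(P\harub Q),
\]
and since $d_1=L_zL_y^{-1}d$ on $y\mathcal{A}$, the induction hypothesis rewrites each $d_1$-term via harmonic products of $d$-images. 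But to close the induction you must then establish
\[
 d(w_1)\ast d(w_2)=yx^{k_1-1}L_zL_y^{-1}\bigl(d(P)\ast d(w_2)\bigr)+yx^{l_1-1}L_zL_y^{-1}\bigl(d(w_1)\ast d(Q)\bigr)-yx^{k_1+l_1-1}L_zL_y^{-1}\bigl(d(P)\ast d(Q)\bigr)
\]
(with $L_zL_y^{-1}(1):=1$), a genuinely new, $z$-deformed recursion for $\ast$ in which the contraction term enters with a \emph{minus} sign. This is exactly where the sign flip between $\harub$ and $\ast$ must be proved in arbitrary depth, and it is not an instance of the defining recursion of $\ast$: the words $d(w_i)$ carry interior $z$'s, so the first-letter recursion does not apply to them directly, and expanding each leading $z=x+y$ creates merged blocks $yx^{k_1+k_2-1}$, $yx^{k_1+l_1-1},\dots$ whose regrouping against the $\ast$-contraction term is a multi-term computation. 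You carry it out only at depth $(1,1)$ and then assert that ``the general case follows the same pattern''; that assertion is the entire content of the lemma.

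The proposed fallback does not repair this. Declaring, ``within the quasi-shuffle formalism (cf.\ \cite{Hof97})'', that $d$ is ``the canonical comparison map intertwining the two contraction signs'' assumes precisely what is to be proved: that intertwining property \emph{is} Muneta's theorem. Moreover \cite{Hof97} cannot carry the weight --- it introduces the harmonic product $\ast$ but contains neither $\harub$, nor signed letter products, nor merge maps; the general quasi-shuffle machinery appears in Hoffman's later work (and Hoffman--Ihara), and even there the statement that the all-coefficients-one merge map intertwines the letter products $z_k\cdot z_l=-z_{k+l}$ and $z_k\cdot z_l=+z_{k+l}$ is a theorem needing proof, not a formality of the setup. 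To complete your argument, either prove the displayed $\ast$-recursion by its own induction (expanding the leading $z$'s and regrouping --- it is true, but must be written out), or cite a source that actually contains the intertwining statement. For the expected shape of such an argument, compare the paper's proof of Lemma \ref{Im}'s companion, Lemma \ref{uuu}: there the authors first isolate a precise recursion \eqref{963} for the map $u$ before inducting --- the analogous preparatory identity for $d$ is what your sketch lacks.
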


Lastly, following \cite{HMO19}, we define the product $\diamond$ on $\mathcal{A}$ by
\begin{align} \label{diamond}
 \begin{split}
 &\qquad\qquad w \diamond 1=1 \diamond w=w, \\
 &xw_1 \diamond xw_2 =x(w_1 \diamond xw_2) - x(yw_1 \diamond w_2), \\ 
 &xw_1 \diamond yw_2 =x(w_1 \diamond yw_2) + y(xw_1 \diamond w_2), \\ 
 &yw_1 \diamond xw_2 =y(w_1 \diamond xw_2) + x(yw_1 \diamond w_2), \\
 &yw_1 \diamond yw_2 =y(w_1 \diamond yw_2) - y(xw_1 \diamond w_2)
 \end{split}
\end{align}
for $w,w_1,w_2\in\mathcal{A}$ together with $\mathbb{Q}$-bilinearity.  
We find that the product $\diamond$ is associative and commutative. 
Let $\phi$ be the automorphism on $\mathcal{A}$ given by $\phi (x)=z$ and $\phi (y)=-y$.
We note that $\phi$ is an involution. 
The product $\diamond$ is thought of a kind of the harmonic product $\ast$ by virtue of $w_1 \diamond w_2= \phi (\phi(w_1) \ast \phi(w_2))$ for $w_1, w_2 \in \mathcal{A}$. 
\begin{lem}[{\cite[Proposition 2.3]{HMO19}}] \label{x+y}
 For $w_1,w_2\in\mathcal{A}$, we have
 \[ 
  zw_1\diamond w_2 = w_1\diamond zw_2 = z(w_1 \diamond w_2).
 \] 
\end{lem}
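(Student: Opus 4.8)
The plan is to prove the first equality $zw_1\diamond w_2 = z(w_1\diamond w_2)$ directly from the defining rules \eqref{diamond}, and then to obtain the second equality $w_1\diamond zw_2 = z(w_1\diamond w_2)$ for free by invoking the commutativity of $\diamond$, which has already been recorded just above the statement. Concretely, commutativity gives $w_1\diamond zw_2 = zw_2\diamond w_1 = z(w_2\diamond w_1)=z(w_1\diamond w_2)$, so all the content lies in the first equality.

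To prove the first equality, I would start by using $z=x+y$ together with the $\mathbb{Q}$-bilinearity of $\diamond$ to split
\[
 zw_1\diamond w_2 = xw_1\diamond w_2 + yw_1\diamond w_2,
\]
and then argue by cases on the leading letter of $w_2$. The case $w_2=1$ is immediate, since both sides reduce to $zw_1$. For $w_2=xw_2'$, I would expand $xw_1\diamond xw_2'$ by the second rule in \eqref{diamond} and $yw_1\diamond xw_2'$ by the fourth rule; the two contributions $\mp\,x(yw_1\diamond w_2')$, carrying opposite signs, cancel exactly, leaving $x(w_1\diamond xw_2')+y(w_1\diamond xw_2')=z(w_1\diamond w_2)$. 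The case $w_2=yw_2'$ is entirely parallel: the third and fifth rules contribute $\pm\,y(xw_1\diamond w_2')$, which again cancel, leaving $z(w_1\diamond yw_2')=z(w_1\diamond w_2)$.

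The point worth emphasizing is that no induction is needed: in each case a single application of the defining recursion to the two summands already produces $z$ times $w_1\diamond w_2$, with the unwanted cross terms annihilating one another. The only genuine obstacle is sign bookkeeping, namely checking that the minus sign in each rule $uw_1\diamond uw_2$ is opposite to the plus sign in the corresponding mixed rule, so that the cross terms really cancel rather than reinforce. I would also note that trying to prove the second equality symmetrically, by splitting $zw_2=xw_2+yw_2$ and casing on the leading letter of $w_1$, does \emph{not} enjoy the same clean cancellation, which is exactly why routing through commutativity is the efficient path.
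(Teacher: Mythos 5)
Your proof is correct, and it is worth noting that the paper itself contains no proof of Lemma~\ref{x+y} to compare against: the result is imported verbatim from \cite[Proposition 2.3]{HMO19}, so you are supplying an argument where the paper has only a citation. Your computation checks out against the rules \eqref{diamond}. Splitting $zw_1\diamond w_2=xw_1\diamond w_2+yw_1\diamond w_2$ and casing on the leading letter of $w_2$ (reduced to words by bilinearity), the cross terms cancel exactly as you say: for $w_2=xw_2'$ the second and fourth rules contribute $-x(yw_1\diamond w_2')$ and $+x(yw_1\diamond w_2')$, and for $w_2=yw_2'$ the third and fifth rules contribute $+y(xw_1\diamond w_2')$ and $-y(xw_1\diamond w_2')$, leaving $z(w_1\diamond w_2)$ after a single unfolding of the recursion, with no induction needed. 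Your appeal to commutativity for the second equality is legitimate in context, since the paper records commutativity of $\diamond$ immediately before the lemma (it follows from $w_1\diamond w_2=\phi(\phi(w_1)\ast\phi(w_2))$ and the commutativity of $\ast$), and your observation that the symmetric direct attack on $w_1\diamond zw_2$ fails is accurate: casing on the leading letter of $w_1$ leaves uncancelled terms such as $-x(yw_1'\diamond w_2)+y(xw_1'\diamond w_2)$, which would force a genuine induction. One alternative worth knowing: the same conjugation identity gives a short proof, since $\phi(z)=x$ transports the lemma to the corresponding property $xu\ast v=u\ast xv=x(u\ast v)$ of the (extended) harmonic product, which is presumably the viewpoint behind \cite{HMO19}; your recursion-level cancellation argument is more elementary and entirely self-contained, at the cost of the sign bookkeeping you rightly flag as the only real content.
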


\begin{lem} \label{y}
 For $w_1,w_2\in\mathcal{A}$, we have 
 \begin{align*}
  w_1xw_2 \diamond y
  =(w_1 \diamond y) x w_2 +w_1 x (w_2 \diamond y).
 \end{align*}
\end{lem}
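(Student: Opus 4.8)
The plan is to prove the identity by induction on the length (the number of letters) of the word $w_1$, peeling off its leading letter and applying the defining relations (\ref{diamond}) for $\diamond$ with the second factor taken to be $y=y\cdot 1$. As preparation I would first record the two specializations of (\ref{diamond}) obtained in this way: for every word $v\in\mathcal{A}$,
\[
 xv\diamond y = x(v\diamond y)+yxv,
 \qquad
 yv\diamond y = y(v\diamond y)-yxv,
\]
which follow at once from the relations for $xw_1\diamond yw_2$ and for $yw_1\diamond yw_2$ together with the base relation $v\diamond 1=v$. The base case $w_1=1$ of the induction is then exactly the first of these formulas with $v=w_2$, since $1\diamond y=y$ yields the desired $xw_2\diamond y=yxw_2+x(w_2\diamond y)$.

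For the inductive step I would write $w_1=av$ with $a\in\{x,y\}$ and $v$ of strictly smaller length. On the left-hand side I peel the leading $a$ off of $avxw_2\diamond y$ using the appropriate specialization above, and then apply the induction hypothesis to the shorter product, namely $vxw_2\diamond y=(v\diamond y)xw_2+vx(w_2\diamond y)$. On the right-hand side I reconstruct $w_1\diamond y=av\diamond y$ by the same specialization and distribute over the trailing $xw_2$. Comparing the two expansions, the interior terms $x(v\diamond y)xw_2$ (resp. $y(v\diamond y)xw_2$) and $avx(w_2\diamond y)$ appear identically on both sides.

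The step I expect to require the most careful bookkeeping is the matching of the remaining boundary contributions of the form $yxvxw_2$: one produced by peeling the left-hand side and one produced by rebuilding $w_1\diamond y$ on the right-hand side. In the branch $a=x$ both carry a plus sign, and in the branch $a=y$ both carry a minus sign (coming from the minus in the second specialization), so in each case the boundary terms coincide rather than merely cancel, and the two sides agree. Everything else is a routine formal manipulation inside the free algebra $\mathcal{A}$; in particular the argument uses only the recursions (\ref{diamond}) and makes no appeal to associativity or commutativity of $\diamond$, nor to the relation with the harmonic product $\ast$ (which would be available through $\phi$ but would only complicate the handling of the single letter $y$).
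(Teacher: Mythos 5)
Your proof is correct. The two specializations $xv\diamond y=x(v\diamond y)+yxv$ and $yv\diamond y=y(v\diamond y)-yxv$ do follow from \eqref{diamond} with the second factor read as $y\cdot 1$, your base case $w_1=1$ is exactly the first of these with $v=w_2$, and in the inductive step the boundary terms $yxvxw_2$ indeed appear with matching sign on both sides (plus in the branch $a=x$, minus in the branch $a=y$), so the expansions agree; no associativity or commutativity of $\diamond$ is needed, as you say. The skeleton is the same as the paper's proof---induction on $\deg(w_1)$ with the base case $w_1=1$ and a case split on the leading letter---but your decomposition differs: the paper splits words according to the basis $\{x,z\}$ with $z=x+y$, so that the case $w_1=zw_1'$ is dispatched in one line by Lemma \ref{x+y} (the letter $z$ slides through $\diamond$) and an explicit computation is only needed for $w_1=xw_1'$, whereas you work directly in the letters $\{x,y\}$ and handle the extra sign coming from the $y$-rule by hand. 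Your route is the more self-contained one, relying on nothing beyond the defining recursion \eqref{diamond}; the paper's route is marginally shorter and consistent with its recurring use of the $z$-trick (the same device reappears in the proofs of Proposition \ref{key} and Theorem \ref{main1}), at the cost of invoking Lemma \ref{x+y}.
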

\begin{proof} 
 It is enough to show when $w_1$ is a word. 
 We prove the lemma by induction on $\deg(w_1)$. 
 When $\deg(w_1)=0$, we easily see the lemma holds. 
 Assume $\deg(w_1)\ge1$.
 If $w_1=zw_1'\,(w_1'\in\mathcal{A})$, by the induction hypothesis and Lemma \ref{x+y}, we have
 \begin{align*}
  \textrm{LHS} 
  =z(w_1'xw_2 \diamond y) 
  =z(w_1' \diamond y)xw_2 +zw_1'x(w_2 \diamond y) 
  =\textrm{RHS}. 
 \end{align*}
 If $w_1=xw_1'\,(w_1'\in\mathcal{A})$, by the induction hypothesis and \eqref{diamond}, we have
 \begin{align*}  
  \textrm{LHS} 
  =x(w_1'xw_2 \diamond y) +yw_1xw_2 
  =x(w_1' \diamond y)xw_2 +w_1x(w_2 \diamond y) +yw_1xw_2 
  =\textrm{RHS}. 
 \end{align*}
 This finishes the proof.
\end{proof}

\section{Proof of Theorem \ref{main1}}
In this section we prove Theorem \ref{main1}. 
For a forest $f$, we set a polynomial $F_f\in\mathcal{A}^1$ recursively by 
\begin{itemize}
 \item[(1)]  $F_{\I}=1$,
 \item[(2)] 
 $F_{\,\begin{xy}
   {(0,0) \ar @{{*}-{*}} (0,0)}
  \end{xy}\,\,}=y$, 
 \item[(3)] $F_t=L_yL_{x+2y}L^{-1}_y(F_f)$ if $t=B_+(f)$ is a tree and $f\ne\I$,
 \item[(4)] $F_f=F_g \diamond F_h$ if $f=gh$.
\end{itemize} 
The subscript of $F$ is extended linearly.
Put $L=L_yL_{x+2y}L^{-1}_y$.
To prove theorem \ref{main1}, next proposition plays a key role.  
\begin{prop} \label{key}
 For $w_1,w_2\in\mathcal{A}$ and $f\in \mathcal{H}$, we have 
 \begin{align*}
  w_1xw_2 \diamond F_f
  =\sum_{(f)} (F_{f'} \diamond w_1) x (F_{f''} \diamond w_2),
 \end{align*}
 where the sum on the right-hand side comes from the Sweedler notation of the coproduct $\Delta(f)$. 
\end{prop}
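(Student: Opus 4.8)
The plan is to argue by induction on the number of vertices of $f$, following the four-part recursive definition of $F_f$ and the matching recursion for the coproduct $\Delta$. Write $P_f(w_1,w_2)=w_1xw_2\diamond F_f$ for the left-hand side and $Q_f(w_1,w_2)=\sum_{(f)}(F_{f'}\diamond w_1)x(F_{f''}\diamond w_2)$ for the right-hand side; the goal is $P_f=Q_f$ for all $w_1,w_2\in\mathcal{A}$. There are three cases to treat: the base forests $f=\I$ and $f=B_+(\I)$, the product forests $f=gh$, and the trees $f=B_+(g)$ with $g\neq\I$. The induction is well founded because the product case reduces to the strictly smaller $g,h$ and the tree case to the strictly smaller $g$.

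The base cases are immediate. For $f=\I$ we have $F_{\I}=1$ and $\Delta(\I)=\I\otimes\I$, so both sides equal $w_1xw_2$. For the single-vertex tree $f=B_+(\I)$ we have $F_f=y$ and $\Delta(f)=\I\otimes f+f\otimes\I$, so $Q_f(w_1,w_2)=w_1x(y\diamond w_2)+(y\diamond w_1)xw_2$; since $\diamond$ is commutative this is exactly the right-hand side of Lemma~\ref{y}, whose left-hand side is $w_1xw_2\diamond y=P_f(w_1,w_2)$. Thus the base case is nothing but Lemma~\ref{y}.

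For the product case $f=gh$ I would use $F_{gh}=F_g\diamond F_h$ together with $\Delta(gh)=\Delta(g)\Delta(h)$. Writing $\Delta(g)=\sum g'\otimes g''$ and $\Delta(h)=\sum h'\otimes h''$, so that $F_{(gh)'}=F_{g'}\diamond F_{h'}$ and $F_{(gh)''}=F_{g''}\diamond F_{h''}$, the associativity of $\diamond$ gives $P_{gh}=(w_1xw_2\diamond F_g)\diamond F_h$. Applying the induction hypothesis for $g$ expands this into a sum of terms $(u_1xu_2)\diamond F_h$ with $u_1=F_{g'}\diamond w_1$ and $u_2=F_{g''}\diamond w_2$, to each of which the induction hypothesis for $h$ applies; commutativity of $\diamond$ then recombines $F_{h'}\diamond F_{g'}=F_{(gh)'}$ and $F_{h''}\diamond F_{g''}=F_{(gh)''}$, yielding $Q_{gh}$. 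This case is routine bookkeeping.

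The tree case $t=B_+(g)$ with $g\neq\I$ is the crux, and the only place where the operator $L=L_yL_{x+2y}L_y^{-1}$ enters. Here $F_t=L(F_g)$ and $\Delta(t)=\I\otimes t+(B_+\otimes\id)\Delta(g)$. It is convenient first to extend $L$ to an operator $\mathcal{L}$ on $\mathcal{A}^1$ by setting $\mathcal{L}(1)=y$, so that $F_{B_+(h)}=\mathcal{L}(F_h)$ holds uniformly for every forest $h$ (the value $h=\I$ reproducing $F_{B_+(\I)}=y$); then
\[
 Q_t(w_1,w_2)=w_1x(F_t\diamond w_2)+\sum_{(g)}\bigl(\mathcal{L}(F_{g'})\diamond w_1\bigr)x\bigl(F_{g''}\diamond w_2\bigr),
\]
the first summand being the contribution of $\I\otimes t$. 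I would prove $P_t=Q_t$ by a secondary induction on $\deg w_1$, mirroring the proof of Lemma~\ref{y}. When $w_1=zw_1'$, Lemma~\ref{x+y} lets $z$ be pulled to the front of every $\diamond$-product on both sides, so that both $P_t$ and $Q_t$ reduce to $z$ times their values at $w_1'$, and the secondary hypothesis closes this case cleanly. The remaining case $w_1=xw_1'$ is the genuine obstacle: since $F_t\in y\mathcal{A}$, the defining relation of $\diamond$ for an $x$-leading first argument against a $y$-leading second argument must be unfolded, and the resulting cross terms have to be reorganized, using the primary induction hypothesis for $g$ and Lemmas~\ref{x+y} and~\ref{y}, so that the internal factor $x+2y$ produced by $L$ recombines exactly into the $\I\otimes t$ term and the $(B_+\otimes\id)\Delta(g)$ terms of the coproduct. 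Controlling this recombination — equivalently, the controlled failure of $L$ to commute with $\diamond$, which stems from the fact that $\diamond$ does not preserve $y\mathcal{A}$ — is where I expect the main difficulty to lie, and where the detailed computation will be concentrated.
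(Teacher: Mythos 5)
Your skeleton---primary induction on $\deg(f)$, base case via Lemma~\ref{y}, product case by associativity and commutativity of $\diamond$ plus multiplicativity of $\Delta$, tree case $t=B_+(g)$ handled by a secondary induction on $\deg(w_1)$ with the $z$-leading case dispatched by Lemma~\ref{x+y}---is exactly the paper's strategy, and everything you actually carry out is correct. But at the decisive point the proposal stops being a proof: your secondary induction has no base case, and the base case is where all the content lives. The cases $w_1=zw_1'$ and $w_1=xw_1'$ cover only $\deg(w_1)\ge1$; the case $w_1=1$, i.e.\ the identity
\[
 xw_2\diamond F_t=\sum_{(t)}F_{t'}\,x\,(F_{t''}\diamond w_2),
\]
is precisely the ``controlled failure of $L$ to commute with $\diamond$'' that you flag but defer, and nothing in the proposal produces it. In the paper it is obtained by expanding $F_t=L(F_g)=yxL_y^{-1}F_g+2yF_g$, unfolding the $\diamond$-recursion, applying the primary induction hypothesis to $2y(xw_2\diamond F_g)$, and---the one genuinely non-obvious step---establishing the auxiliary identity
\[
 L_y^{-1}F_g\diamond xw_2=\sum_{(g)}L_y^{-1}F_{g'}\,x\,(F_{g''}\diamond w_2),
\]
which cannot be read off from the induction hypothesis because $L_y^{-1}F_g$ need not lie in $y\mathcal{A}$; it is proved by multiplying on the left by $y$ (using injectivity of $L_y$ and $y(L_y^{-1}F_g\diamond xw_2)=F_g\diamond xw_2-x(F_g\diamond w_2)$, after which the primary hypothesis applies to $F_g\diamond xw_2$). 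Your proposal contains no substitute for this trick.

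You also misplace the difficulty. Once the $w_1=1$ case is in hand, the step $w_1=xw_1'$ is routine and requires no unfolding of the $\diamond$-recursion at all: apply the $w_1=1$ identity with $w_2$ replaced by $w_1'xw_2$, expand each inner factor $F_{f''}\diamond w_1'xw_2$ by the secondary hypothesis (when $f''=f$, since then $\deg(w_1')<\deg(w_1)$) or the primary one (when $\deg(f'')<\deg(f)$), expand the right-hand side of the target similarly, and match the two via coassociativity of $\Delta$. Your plan to unfold the defining relation for an $x$-leading first argument against the $y$-leading $F_t$ at the step $w_1=xw_1'$ would, if pushed through, amount to re-deriving the missing base case in a strictly harder setting. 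So the proposal is a correct outline with the right architecture, but it names the central obstacle rather than overcoming it; as a proof it has a genuine gap exactly at $w_1=1$ for trees of degree at least two.
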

\begin{proof} 
 It is enough to show when $f$ is a forest. 
 We prove the proposition by induction on $\deg(f)$. 
 When $\deg(f)=1$, by Lemma \ref{y}, we find the proposition holds.  
 Assume $\deg(f)\ge2$.
 If $f=gh\,(g,h\ne\I)$, by the induction hypothesis and the multiplicativity of the coproduct, we have
 \begin{align*}
  w_1xw_2 \diamond F_f 
  &=w_1xw_2 \diamond (F_g \diamond F_h) \\
  &=(w_1xw_2 \diamond F_g) \diamond F_h \\
  &=\sum_{(g)} (F_{g'} \diamond w_1) x (F_{g''} \diamond w_2) \diamond F_h \\
  &=\sum_{(g)} \sum_{(h)} (F_{h'} \diamond (F_{g'} \diamond w_1)) x (F_{h''} \diamond (F_{g''} \diamond w_2)) \\
  &=\sum_{(g)} \sum_{(h)} ((F_{h'} \diamond F_{g'}) \diamond w_1) x ((F_{h''} \diamond F_{g''}) \diamond w_2) \\
  &=\sum_{(f)} (F_{f'} \diamond w_1) x (F_{f''} \diamond w_2).
 \end{align*}
 
 If $f$ is a tree (with $\deg(f)\ge2$), we have $F_f=L(F_g)$, where $f=B_+(g)$. 
 In this case, we prove the statement for a word $w_1$ by induction on $\deg(w_1)$.
 When $\deg(w_1)=0$, we have 
 \begin{align*}
  xw_2 \diamond F_f 
  &=xw_2 \diamond L(F_g) \\
  &=xw_2 \diamond yxL_{y}^{-1}F_g +xw_2 \diamond 2yF_g \\
  &=x(w_2 \diamond yxL_{y}^{-1}F_g) +y(xw_2 \diamond xL_{y}^{-1}F_g)
   +x(w_2 \diamond 2yF_g) +2y(xw_2 \diamond F_g ) \\
  &=y(xw_2 \diamond xL_{y}^{-1}F_g) +x(w_2 \diamond L(F_g)) +2y(xw_2 \diamond F_g ). 
 \end{align*}
 For the last term on the right-hand side, we have 
 \begin{align*}
  2y(xw_2 \diamond F_g )
  &=2\sum_{(g)} yF_{g'} x (F_{g''} \diamond w_2) \qquad \textrm{(by induction)} \\
  &=2yx(F_{g} \diamond w_2)
   +2\sum_{\substack{ (g) \\ g'\ne\I }} yF_{g'} x (F_{g''} \diamond w_2) \\
  &=2yx(F_{g} \diamond w_2)
   +\sum_{\substack{ (g) \\ g'\ne\I }} L(F_{g'}) x (F_{g''} \diamond w_2) 
   -\sum_{\substack{ (g) \\ g'\ne\I }} yxL_{y}^{-1}F_{g'} x (F_{g''} \diamond w_2).
 \end{align*}
 Then we find
 \begin{align*}
  xw_2 \diamond F_f 
  &=y(xw_2 \diamond xL_{y}^{-1}F_g) +x(w_2 \diamond L(F_g)) +2yx(F_{g} \diamond w_2) \\
  &\quad
   +\sum_{\substack{ (g) \\ g'\ne\I }} L(F_{g'}) x (F_{g''} \diamond w_2) 
   -\sum_{\substack{ (g) \\ g'\ne\I }} yxL_{y}^{-1}F_{g'} x (F_{g''} \diamond w_2).
 \end{align*}
 Since
 \begin{align*}
  x(w_2 \diamond L(F_g)) +yx(F_{g} \diamond w_2) 
   +\sum_{\substack{ (g) \\ g'\ne\I }} L(F_{g'}) x (F_{g''} \diamond w_2) 
  =\sum_{(f)} F_{f'} x (F_{f''} \diamond w_2) \\
  \textrm{(by Proposition \ref{cop} (1) or the definition of $\Delta$)}
 \end{align*}
 and
 \begin{align*}
  y(xw_2 \diamond xL_{y}^{-1}F_g)
  &=y(xL_{y}^{-1}F_g \diamond xw_2) \\
  &=yx(L_{y}^{-1}F_g \diamond xw_2) -yx(F_g \diamond w_2), 
 \end{align*}
 we have 
 \begin{align*}
  xw_2 \diamond F_f 
  &=\sum_{(f)} F_{f'} x (F_{f''} \diamond w_2)
   +yx(L_{y}^{-1}F_g \diamond xw_2) \\
  &\quad -\sum_{\substack{ (g) \\ g'\ne\I }} yxL_{y}^{-1}F_{g'} x (F_{g''} \diamond w_2).
 \end{align*}
 Here we see 
 \[
  L_{y}^{-1}F_g \diamond xw_2=\sum_{(g)} L_{y}^{-1}F_{g'} x (F_{g''} \diamond w_2)
 \]
 since 
 \begin{align*}
  y(L_{y}^{-1}F_g \diamond xw_2)
  &=yL_{y}^{-1}F_g \diamond xw_2 -x(w_2 \diamond F_g) \\
  &=F_g \diamond xw_2 -x(w_2 \diamond F_g) \\
  &=\sum_{ (g) } F_{g'} x (F_{g''}\diamond w_2)
   -x(w_2 \diamond F_g) \\
  &=\sum_{\substack{ (g) \\ g'\ne\I }} F_{g'} x (F_{g''}\diamond w_2).
 \end{align*} 
 Hence we get
 \[
  xw_2 \diamond F_f=\sum_{(f)} F_{f'} x (F_{f''} \diamond w_2). 
 \]
    
 Now we proceed to the case when $\deg(w_1)\ge1$. 
 If $w_1=zw_1'\,(w_1'\in\mathcal{A})$, we have
  \begin{align*}
   zw_1'xw_2 \diamond F_f
   &=z(w_1'xw_2 \diamond F_f) \\
   &=z\sum_{(f)} (F_{f'} \diamond w_1') x (F_{f''} \diamond w_2) \\   
   &=\sum_{(f)} (F_{f'} \diamond w_1) x (F_{f''} \diamond w_2)
  \end{align*}
 by the induction hypothesis. 
 %
 If $w_1=xw_1'\,(w_1'\in\mathcal{A})$, since we have already proved the identity in the case of $w_1=1$, we have
 \begin{align*}
  w_1xw_2 \diamond F_f
  &=\sum_{(f)} F_{f'} x (F_{f''} \diamond w_1' x w_2) \\
  &=\sum_{(f)} F_{f'} x \sum_{(f'')} (F_{f''_a} \diamond w_1') x (F_{f''_b} \diamond w_2),
 \end{align*}
 where we put $\Delta (f'')=\sum_{(f'')} f''_a \otimes f''_b$. 
 We also have
 \begin{align*}
  \sum_{(f)} (F_{f'} \diamond w_1) x (F_{f''} \diamond w_2)
  &=\sum_{(f)} (F_{f'} \diamond xw_1') x (F_{f''} \diamond w_2) \\
  &=\sum_{(f)} \sum_{(f')} F_{f'_a} x (F_{f'_b} \diamond w_1') x (F_{f''} \diamond w_2),
 \end{align*}
 where we put $\Delta (f')=\sum_{(f')} f'_a \otimes f'_b$. 
 By the coassociativity of $\Delta$, we find the result.
\end{proof}

\begin{proof}[Proof of Theorem \ref{main1}]
 We prove the theorem only for forests $f$ and words $w$ by induction on $\deg(f)$ and $\deg(w)$. 
 First, we prove the theorem when $\deg(f)=1$. 
 If $\deg(w)=0$, we easily find the result.  
 Suppose $\deg(w)\ge1$. 
 If $w=zw'\,(w'\in\mathcal{A})$, by Lemmas \ref{z} and \ref{x+y}, and the induction hypothesis, we have
 \[
  \textrm{LHS}
  =\tilde{f}(zw'x)
  =z\tilde{f}(w'x)
  =z(F_f\diamond w')x  
  =(F_f\diamond zw')x
  =\textrm{RHS}.
 \] 
 On the other hand, if $w=xw'\,(w'\in\mathcal{A})$, we have
 \[
  \textrm{LHS}
  =\tilde{f}(xw'x)
  =yxw'x +x\tilde{f}(w'x) 
  \]
  and
  \[
  \textrm{RHS}
  =(y \diamond xw')x  
  =yxw'x +x(y \diamond w')x.
 \] 
 By the induction hypothesis, we find the result. 
 
 Next, suppose $\deg(f)\ge2$. 
 If $f=gh\,(g,h\ne\I)$, we have
 \begin{align*}
  \tilde{f}(wx)
  =\tilde{g}\tilde{h}(wx) =\tilde{g}((F_h \diamond w)x) 
  =(F_g \diamond (F_h \diamond w))x 
  =((F_g \diamond F_h) \diamond w)x 
  =(F_f \diamond w)x.
 \end{align*}
 %
 Let $f$ be a rooted tree and put $f=B_+(g)$.
 When $\deg(w)=0$, we have
 \begin{align*}
  \tilde{f}(x)
  =(yxL_{y}^{-1} +2y) \tilde{g}(x) 
  =(yxL_{y}^{-1} +2y) F_g x 
  =F_f x.
 \end{align*}
 %
 Suppose $\deg(w)\ge1$. 
 If $w=zw'\,(w'\in\mathcal{A})$, we have
 \begin{align*}
  \tilde{f}(zw'x)
  =z\tilde{f}(w'x) 
  =z(F_f \diamond w')x 
  =(F_f \diamond zw')x
 \end{align*}
 by Lemmas \ref{z} and \ref{x+y}.
 If $w=xw'\,(w'\in\mathcal{A})$, we have
 \begin{align*}
  \tilde{f}(xw'x)
  =\sum_{(f)} \tilde{f'}(x) \tilde{f''}(w'x) 
  =\sum_{(f)} F_{f'} x (F_{f''} \diamond w')x 
 \end{align*}
 by the induction hypothesis. 
 By Proposition \ref{key}, we have
 \[
  (F_f \diamond xw')x
  =\sum_{(f)} F_{f'} x (F_{f''} \diamond w')x. 
 \]
 This completes the proof.
\end{proof}
\begin{rem}
 Although the commutativity of RTMs is guaranteed in general in \cite{Tan19}, it is hard to see at a glance. 
 The commutativity of RTMs on $\mathcal{A}x$ also follows from this theorem and the commutativity of the product $\diamond$. 
\end{rem}

\section{Proof of Theorem \ref{main2}}
Let $\mathcal{A}^1_{\ast}$ be the commutative $\mathbb{Q}$-algebra with the harmonic product $\ast$.  
We define the $\mathbb{Q}$-linear map $u\colon \mathcal{A}\to \mathcal{A}^\ast \otimes \mathcal{A}^\ast$ by $u(1)=1$ and sending a word $w=yx^{k_1-1}\cdots yx^{k_r-1}$ to
\[
 \sum_{i=0}^{r} (-1)^{i} yx^{k_1-1}\cdots yx^{k_i-1} \otimes yx^{k_r-1}zx^{k_{r-1}-1}\cdots zx^{k_{i+1}-1}.
\]
The notation $u_w$ is sometimes used instead of $u(w)$ for convenience. 
Let $\mathcal{B} \subset\mathcal{A}^1_{\ast}\otimes\mathcal{A}^1_{\ast}$ be the $\mathbb{Q}$-subalgebra 
algebraically generated by $u_{w}$'s.
The product of the tensor algebra is given componentwisely so that 
\begin{align*}
 &u(yx^{k_1-1}\cdots yx^{k_r-1}) \ast u(yx^{l_1-1}\cdots yx^{l_s-1}) \\
 &=\sum_{i=0}^{r} \sum_{j=0}^{s} (-1)^{i+j} (yx^{k_1-1}\cdots yx^{k_i-1} \ast yx^{l_1-1}\cdots yx^{l_j-1}) \\
  &\qquad\qquad \otimes (yx^{k_r-1}zx^{k_{r-1}-1}\cdots zx^{k_{i+1}-1} \ast yx^{l_s-1}zx^{l_{s-1}-1}\cdots zx^{l_{j+1}-1}).
\end{align*} 
Now we define the $\mathbb{Q}$-linear map $\rho\colon y\mathcal{A}\to y\mathcal{A}$ by setting $\rho(1)=1$ and $\rho=L_y\epsilon L_{y}^{-1}$, where $\epsilon$ is the anti-automorphism on $\mathcal{A}$ such that $\epsilon(x)=x$ and $\epsilon(y)=y$.
Note that $\rho(yx^{k_1-1}\cdots yx^{k_r-1})=yx^{k_r-1}\cdots yx^{k_1-1}$. 
Put $L'_{a}(w_1\otimes w_2)=yx^{a-1} w_1 \otimes w_2$ for $a\in\mathbb{Z}_{\ge1}$.  
\begin{lem} \label{uuu}
 For $w_1,w_2\in\mathcal{A}^1$, we have
\[
 u(w_1 \harub w_2)
 =u(w_1) \ast u(w_2). 
\]
\end{lem}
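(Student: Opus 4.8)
The plan is to prove Lemma \ref{uuu} by induction on the combined depth $r+s$ of the two arguments, exploiting the recursive definitions of both $\harub$ and $u$. First I would dispose of the base cases where one of $w_1, w_2$ equals $1$, which are immediate since $u(1)=1$ acts as the unit and $w\harub 1 = w$. For the inductive step, I would write $w_1 = yx^{k_1-1}W_1$ and $w_2 = yx^{l_1-1}W_2$ and expand $u(w_1\harub w_2)$ using the three-term recursion defining $\harub$, namely the leading-letter splitting into a $yx^{k_1-1}(\cdots)$ term, a $yx^{l_1-1}(\cdots)$ term, and a $-yx^{k_1+l_1-1}(\cdots)$ term.

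The heart of the argument is to compute the left side $u(w_1)\ast u(w_2)$ using the explicit componentwise product formula for the tensor algebra displayed just before the lemma, and to reorganize that double sum $\sum_{i,j}(-1)^{i+j}(\cdots)\otimes(\cdots)$ so that it matches what the recursion produces. The key observation I would isolate is that the \emph{first} tensor slot of $u$ records a left-truncation $yx^{k_1-1}\cdots yx^{k_i-1}$ while the \emph{second} slot records the complementary right part with $y$'s replaced by $z$'s and in reversed order. So I expect to need an auxiliary identity describing how $u$ behaves under prepending a single letter: something of the shape $u(yx^{a-1}w) = L'_a\, u(w) + (\text{correction terms involving the } z\text{-tail})$. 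Making this single-letter recursion for $u$ explicit — and checking it is compatible with the operators $L'_a$ and $\rho$ introduced right before the lemma — is the first concrete sublemma I would establish.

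The main obstacle will be bookkeeping the interaction between the two tensor components: the harmonic product $\ast$ in the second slot involves the $z$-decorated reversed words $yx^{k_r-1}zx^{k_{r-1}-1}\cdots$, and I must verify that the three-term $\harub$-recursion on the first slot is mirrored by a compatible recursion on these $z$-tails after applying $\ast$. Concretely, the $+yx^{k_1-1}(\cdots)$ and $+yx^{l_1-1}(\cdots)$ terms from $\harub$ should correspond to splitting the outer index in the double sum according to whether the smallest-index letter comes from $w_1$ or $w_2$, while the $-yx^{k_1+l_1-1}(\cdots)$ term should match the diagonal contribution where both leading letters are absorbed into a single $yx^{k_1+l_1-1}$. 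I anticipate that the sign structure $(-1)^{i+j}$ together with the stuffle-type three-term recursion of $\ast$ in the second slot is exactly what is needed to collapse cross terms, and that identity \eqref{MA} (or its bilinear refinement) is the algebraic mechanism guaranteeing the cancellations.

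A cleaner alternative, which I would pursue in parallel, is to sidestep the direct double-sum manipulation by invoking Lemma \ref{muneta}. Since $d$ intertwines $\harub$ with $\ast$ via $d(w_1\harub w_2)=d(w_1)\ast d(w_2)$, if I can show that $u$ factors compatibly through $d$ — that is, relate $u$ to $d$ on each tensor component so that the componentwise $\ast$-product on $\mathcal{B}$ pulls back to the $\harub$-structure — then the lemma would follow from Lemma \ref{muneta} applied slotwise, reducing the problem to a formal compatibility check rather than an explicit cancellation. I expect the real work to lie in reconciling the reversal map $\rho$ and the $y\mapsto z$ substitution hidden in the second slot of $u$ with the automorphism $d_1$ defining $d$; pinning down that compatibility precisely is where I would concentrate the effort.
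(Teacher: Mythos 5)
Your main route has the paper's skeleton: induction on $r+s$, with a single-letter recursion for $u$ as the key sublemma. The paper states it as \eqref{963}: for $w=yx^{m_1-1}\cdots yx^{m_t-1}$,
\[
 u(w)=1\otimes d\rho(w)-L'_{m_1}u(yx^{m_2-1}\cdots yx^{m_t-1}),
\]
note the minus sign on $L'_{m_1}$ and that the correction term is $1\otimes d\rho$ of the \emph{whole} word. One then expands $u(w_1\harub w_2)$ by the three-term $\harub$-recursion plus the induction hypothesis, expands $u(w_1)\ast u(w_2)$ bilinearly via \eqref{963}, and matches terms. Up to this point your plan is the paper's proof.

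However, your anticipated cancellation mechanism is misidentified in two concrete ways. First, identity \eqref{MA} plays no role in this lemma; in the paper it enters only later, to show that the multiplication map $M$ annihilates $\mathcal{B}$ (used for Proposition \ref{key_main2}). Second, no recursion in the \emph{second} tensor slot is needed: under the componentwise product the second slots simply multiply along. What actually closes the induction is (i) the scalar identity $d\rho(w_1\harub w_2)=d\rho(w_1)\ast d\rho(w_2)$ — i.e., Lemma \ref{muneta} combined with the fact that $\rho$ is a $\harub$-homomorphism — which matches the $1\otimes(\cdots)$ terms on both sides; and (ii) the three-term harmonic recursion applied in the \emph{first} slot through the operators $L'_a$, namely
\[
 L'_{k_1}A\ast L'_{l_1}B
 =L'_{k_1}(A\ast L'_{l_1}B)+L'_{l_1}(L'_{k_1}A\ast B)+L'_{k_1+l_1}(A\ast B),
\]
together with $L'_a(A)\ast(1\otimes v)=L'_a(A\ast(1\otimes v))$, which lets you absorb the cross terms coming from \eqref{963}. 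So Lemma \ref{muneta} is not a ``cleaner alternative'' to be pursued in parallel: it is an indispensable ingredient of the main inductive argument. Your proposed alternative as stated — that $u$ ``factors compatibly through $d$'' so that the lemma follows slotwise — cannot be made precise: $u$ does not factor through $d$ (the first slot is an untouched truncation, the second a $z$-decorated reversal), and only the scalar identity above is what the proof extracts from $d$. With these two corrections, your outline becomes exactly the paper's proof.
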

\begin{proof}
It is enough to show the lemma for $w_1=yx^{k_1-1}\cdots yx^{k_r-1}$ and $w_2=yx^{l_1-1} \cdots yx^{l_s-1}$.
The proof goes by induction on $r+s$. %
The lemma holds when $r+s\le1$ since $u(1)=1 \otimes 1$. 
Assume $r+s\ge2$. 
Note that 
\begin{align} \label{963}
 \begin{split}
  u(w)
  &=1 \otimes yx^{m_t-1}zx^{m_{t-1}-1}\cdots zx^{m_1-1} -L'_{m_1}u(yx^{m_2-1} \cdots yx^{m_t-1}) \\
  &=1 \otimes d\rho(w) -L'_{m_1}u(yx^{m_2-1} \cdots yx^{m_t-1})
 \end{split}
\end{align}
holds for $w=yx^{m_1-1}\cdots yx^{m_t-1}$. 
By definitions and the induction hypothesis, we have
\begin{align*}
 &u(w_1 \harub w_2) \\
 &=u(yx^{k_1-1} (yx^{k_2-1}\cdots yx^{k_r-1} \harub w_2) 
  +yx^{l_1-1} (w_1 \harub yx^{l_2-1} \cdots yx^{l_s-1}) \\
  &\qquad -yx^{k_1+l_1-1} (yx^{k_2-1}\cdots yx^{k_r-1} \harub yx^{l_2-1} \cdots yx^{l_s-1})) \\
 &=1 \otimes d\rho(w_1 \harub w_2) -L'_{k_1} (u(yx^{k_2-1} \cdots yx^{k_r-1}) \ast u(w_2)) \\
  &\quad +1 \otimes d\rho(w_1 \harub w_2) -L'_{l_1} (u(w_1) \ast u(yx^{l_2-1} \cdots yx^{l_s-1})) \\
  &\quad -1 \otimes d\rho(w_1 \harub w_2) +L'_{k_1+l_1} (u(yx^{k_2-1} \cdots yx^{k_r-1}) \ast u(yx^{l_2-1} \cdots yx^{l_s-1})) 
  \qquad \textrm{(by \eqref{963})} \\
 &=1 \otimes d\rho(w_1 \harub w_2) -L'_{k_1} (u(yx^{k_2-1} \cdots yx^{k_r-1}) \ast u(w_2)) 
    -L'_{l_1} (u(w_1) \ast u(yx^{l_2-1} \cdots yx^{l_s-1})) \\
  &\quad +L'_{k_1+l_1} (u(yx^{k_2-1} \cdots yx^{k_r-1}) \ast u(yx^{l_2-1} \cdots yx^{l_s-1}))
\end{align*}
and
\begin{align*}
 &u(w_1) \ast u(w_2) \\
 &=(1 \otimes d\rho(w_1) -L'_{k_1}u(yx^{k_2-1} \cdots yx^{k_r-1}))
  \ast (1 \otimes d\rho(w_2) -L'_{l_1}u(yx^{l_2-1} \cdots yx^{l_s-1})) \\
 &=1 \otimes (d\rho(w_1) \ast d\rho(w_2)) 
  -L'_{k_1} u(yx^{k_2-1} \cdots yx^{k_r-1}) \ast (1 \otimes d\rho(w_2)) \\
  &\quad -(1 \otimes d\rho(w_1)) \ast L'_{l_1} u(yx^{l_2-1} \cdots yx^{l_s-1}) 
   +L'_{k_1}u(yx^{k_2-1} \cdots yx^{k_r-1}) \ast L'_{l_1} u(yx^{l_2-1} \cdots yx^{l_s-1}). 
\end{align*}
Let us show that these two coincide. 
Because of Lemma \ref{muneta} and $\rho(w_1 \harub w_2)=\rho(w_1) \harub \rho(w_2)$, we have 
\[
 d\rho(w_1 \harub w_2)
 =d\rho(w_1) \ast d\rho(w_2). 
\]
Also we find that
\begin{align*}
 &-L'_{k_1} (u(yx^{k_2-1} \cdots yx^{k_r-1}) \ast u(w_2))
 +L'_{k_1} u(yx^{k_2-1} \cdots yx^{k_r-1}) \ast (1 \otimes d\rho(w_2)) \\
 &= -L'_{k_1} (u(yx^{k_2-1} \cdots yx^{k_r-1}) \ast u(w_2)
  -u(yx^{k_2-1} \cdots yx^{k_r-1}) \ast (1 \otimes d\rho(w_2))) \\
 &=-L'_{k_1} (u(yx^{k_2-1} \cdots yx^{k_r-1}) \ast L'_{l_1} u(yx^{l_2-1} \cdots yx^{l_s-1}))
\end{align*}
and
\begin{align*}
 &-L'_{l_1} (u(w_1) \ast u(yx^{l_2-1} \cdots yx^{l_s-1}))
 +(1 \otimes d\rho(w_1)) \ast L'_{l_1} u(yx^{l_2-1} \cdots yx^{l_s-1}) \\
 &= -L'_{l_1} (u(w_1) \ast u(yx^{l_2-1} \cdots yx^{l_s-1})
  -(1 \otimes d\rho(w_1)) \ast u(yx^{l_2-1} \cdots yx^{l_s-1})) \\
 &=-L'_{l_1} (L'_{k_1} u(yx^{k_2-1} \cdots yx^{k_r-1}) \ast u(yx^{l_2-1} \cdots yx^{l_s-1})). 
\end{align*}
Since
\begin{align*}
 &L'_{k_1+l_1} (u(yx^{k_2-1} \cdots yx^{k_r-1}) \ast u(yx^{l_2-1} \cdots yx^{l_s-1})) \\
  &-L'_{k_1}u(yx^{k_2-1} \cdots yx^{k_r-1}) \ast L'_{l_1} u(yx^{l_2-1} \cdots yx^{l_s-1}) \\
 &=L'_{k_1} (u(yx^{k_2-1} \cdots yx^{k_r-1}) \ast L'_{l_1} u(yx^{l_2-1} \cdots yx^{l_s-1})) \\
  &\quad +L'_{l_1} (L'_{k_1} u(yx^{k_2-1} \cdots yx^{k_r-1}) \ast u(yx^{l_2-1} \cdots yx^{l_s-1})), 
\end{align*}
we have the result. 
\end{proof}

Write $u_{w}=\sum_{i=0}^r u'_{w,i} \otimes u''_{w,i}=\sum_{w} u'_{w} \otimes u''_{w}$.
We define the $\mathbb{Q}$-linear maps $p,q\colon \mathcal{B}\to \mathcal{A}^\ast\otimes\mathcal{A}^\ast$ by
\begin{align*}
 p(u_{w_1}\ast\cdots\ast u_{w_r})
 &=\sum_{\substack{ u'_{w_1}\cdots u'_{w_r} \notin \mathbb{Q} \\ w_1,\dots,w_r }} 
  yxL_y^{-1} (u'_{w_1} \ast\cdots\ast u'_{w_r}) \otimes
   (u''_{w_1} \ast\cdots\ast u''_{w_r}) \\
  &\quad +1\otimes (d\rho(w_1)\ast\cdots\ast d\rho(w_1))x, \\
 q(u_{w_1}\ast\cdots\ast u_{w_r})
 &=\sum_{w_1,\dots,w_r} 
  y (u'_{w_1} \ast\cdots\ast u'_{w_r}) \otimes
   (u''_{w_1} \ast\cdots\ast u''_{w_r}) \\
  &\quad -1\otimes (d\rho(w_1)\ast\cdots\ast d\rho(w_1))z. 
\end{align*}
\begin{lem} \label{Im}
 We have $\Imm p, \Imm q\subset \mathcal{B}$.
\end{lem}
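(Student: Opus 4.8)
The plan is to produce, for each generating product $u_{w_1}\ast\cdots\ast u_{w_r}$, an explicit closed form for its image under $p$ and $q$ that is visibly a $u$-image of a word in $y\mathcal{A}$, hence a $\mathbb{Q}$-linear combination of generators of $\mathcal{B}$; the containments $\Imm p,\Imm q\subset\mathcal{B}$ then follow by $\mathbb{Q}$-linearity. First I would use Lemma \ref{uuu} to collapse the product into a single $u$-image: writing $V=w_1\harub\cdots\harub w_r\in\mathcal{A}^1$, we have $u_{w_1}\ast\cdots\ast u_{w_r}=u(V)$, and correspondingly $d\rho(w_1)\ast\cdots\ast d\rho(w_r)=d\rho(V)$ by Lemma \ref{muneta} together with the multiplicativity $\rho(w_1\harub w_2)=\rho(w_1)\harub\rho(w_2)$ used in the proof of Lemma \ref{uuu}. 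Thus it suffices to compute $p(u(V))$ and $q(u(V))$, and $d\rho(V)$ is precisely the second tensor factor of the part of $u(V)$ supported on a constant first tensor factor, i.e. the part equal to $1\otimes d\rho(V)$.

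With this reduction, $q$ reads $q(u(V))=(L_y\otimes\id)u(V)-1\otimes d\rho(V)z$. The key input is the recursion \eqref{963}: applied with a prepended letter $y$ (the case $m_1=1$), it gives $(L_y\otimes\id)u(v)=1\otimes d\rho(yv)-u(yv)$ for every word $v$, and hence for $V$ by linearity. A one-line computation on words then shows $d\rho(yv)=d\rho(v)z$, so the two $1\otimes(\cdots)$ terms cancel and I obtain $q(u(V))=-u(yV)$. Since $yV\in y\mathcal{A}$ is a $\mathbb{Q}$-linear combination of words, $u(yV)$ is a combination of generators, whence $q(u(V))\in\mathcal{B}$.

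The map $p$ is the more delicate case and I expect it to be the main obstacle, because its first-factor operator $yxL_y^{-1}$ is not a prepend operator $L'_a$, so \eqref{963} does not apply directly. Instead I would inspect the terms of $u(v)$ one at a time. Writing $v=yx^{a_1-1}\cdots yx^{a_n-1}$ and letting $v^{+}=yx^{a_1}yx^{a_2-1}\cdots yx^{a_n-1}$ be the word with its first exponent incremented, a direct check shows that $yxL_y^{-1}$ carries the first tensor factor $u'_{v,i}$ to $u'_{v^{+},i}$ while fixing $u''_{v,i}$ for every $i\ge1$; equivalently, $(yxL_y^{-1}\otimes\id)$ sends the part of $u(v)$ with nonconstant first factor to the corresponding part of $u(v^{+})$. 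Combined with the boundary identity $d\rho(v^{+})=d\rho(v)x$, this yields $(yxL_y^{-1}\otimes\id)\bigl(u(v)-1\otimes d\rho(v)\bigr)=u(v^{+})-1\otimes d\rho(v)x$, and hence the same with $v,v^{+}$ replaced by $V,V^{+}$ by linearity.

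Feeding this into the definition of $p$, where the constant-first-factor term is first excluded and then restored as $1\otimes d\rho(V)x$, the correction terms cancel and I get $p(u(V))=u(V^{+})$, with $V^{+}=yxL_y^{-1}(V)$ (the first-exponent increment applied to each word of the nonconstant part of $V$). As $V^{+}\in y\mathcal{A}$, this is again a combination of generators and lies in $\mathcal{B}$. Extending both formulas $\mathbb{Q}$-linearly over $\mathcal{B}$ finishes the proof. Beyond the two short boundary identities, the only points needing care are that $V$ carries no constant term once redundant factors $u_1=1\otimes1$ are dropped (so $L_y^{-1}$ is legitimate), and the precise matching of the constant-first-factor term that is subtracted and then reinserted in the definitions of $p$ and $q$.
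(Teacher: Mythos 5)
Your proof is correct and takes essentially the same route as the paper: both reduce via Lemma \ref{uuu} to a single $u$-image and then identify $p(u_w)$ and $q(u_w)$ explicitly as $u$-images of the word with its first exponent incremented (i.e.\ $yxL_y^{-1}$ applied) and with a $y$ prepended, respectively, using \eqref{963} and the boundary identities $d\rho(v^{+})=d\rho(v)x$, $d\rho(yv)=d\rho(v)z$. Incidentally, your sign $q(u(V))=-u(yV)$ is the correct one (the paper's displayed $q(u_w)=u(y^2x^{k_1-1}yx^{k_2-1}\cdots yx^{k_r-1})$ omits a minus sign), though this is immaterial for membership in $\mathcal{B}$.
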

\begin{proof}
 From Lemma \ref{uuu}, we have
 \[
  u_{w_1}\ast\cdots\ast u_{w_r}
  =u(w_1 \harub \cdots \harub w_r).
 \]
 Thus, we need only to prove the lemma for the case $r=1$. 
 Since
 \begin{align*}
  p(u_w)
  &=1\otimes d\rho(w)x +\sum_{w} yxL_{y}^{-1} u'_w\otimes u''_w 
  =u(yx^{k_1} yx^{k_2-1}\cdots yx^{k_r-1}) \in \mathcal{B}, \\
  q(u_w)
  &=L'_y(u_w) -1\otimes d\rho(w)z 
  =u(y^2x^{k_1-1}yx^{k_2-1}\cdots yx^{k_r-1}) \in \mathcal{B},
 \end{align*}
 we obtain the result.
\end{proof}

For a forest $f$, we set a polynomial $G_f\in\mathcal{A}^1$ recursively by 
\begin{itemize}
 \item[(1)]  $G_{\I}=1$,
 \item[(2)] 
 $G_{\,\begin{xy}
   {(0,0) \ar @{{*}-{*}} (0,0)}
  \end{xy}\,\,}=-y$, 
 \item[(3)] $G_t=R_{2x+y}(G_f)$ if $t=B_+(f)$ is a tree and $f\ne\I$,
 \item[(4)] $G_f=G_g \diamond G_h$ if $f=gh$.
\end{itemize} 
The subscript of $G$ is extended linearly. 
The following lemma is immediate from Lemmas \ref{uuu} and \ref{Im}, and definitions. 
\begin{lem} \label{pqinA}
 Let $f$ be any forest with $f\ne\I$.
 If $\sum_{(f)} \phi(F_{f'}) \otimes \phi(G_{f''})\in \mathcal{B}$, we have
 \begin{align*}
  p\biggl(\sum_{(f)} \phi(F_{f'}) \otimes \phi(G_{f''}) \biggr)
  &=\sum_{\substack{ (f) \\ f'\ne\I }} yxL_{y}^{-1}\phi(F_{f'}) \otimes \phi(G_{f''})
   +\phi(F_{\I}) \otimes \phi(G_{f})x
   \in \mathcal{B}, \\
  q\biggl(\sum_{(f)} \phi(F_{f'}) \otimes \phi(G_{f''}) \biggr)
  &=\sum_{\substack{ (f) \\ f'\ne\I }} y\phi(F_{f'}) \otimes \phi(G_{f''})
  +y\phi(F_{\I}) \otimes \phi(G_{f})
  -\phi(F_{\I}) \otimes \phi(G_{f})z
  \in \mathcal{B}. 
 \end{align*}
\end{lem}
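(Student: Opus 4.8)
The plan is to expand the definitions of $p$ and $q$ on the element $\sum_{(f)}\phi(F_{f'})\otimes\phi(G_{f''})$, using the hypothesis that it lies in $\mathcal{B}$ only to justify applying $p,q$ and, via Lemma~\ref{Im}, to conclude that the outputs again lie in $\mathcal{B}$. The only substantive point is to match the Sweedler splitting of $\Delta(f)$ according to whether $f'=\I$ with the splitting of $\mathcal{A}^\ast\otimes\mathcal{A}^\ast$ according to whether the first tensor factor is a constant, since the latter is exactly the decomposition that the definitions of $p$ and $q$ are built upon.

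First I would record the structural fact that $F_{f'}\in y\mathcal{A}$ for every forest $f'\ne\I$. This follows by induction on $\deg(f')$ from the recursive definition of $F$: the base case is $F_{\fo}=y$, the operator $L=L_yL_{x+2y}L_y^{-1}$ has image in $y\mathcal{A}$ because of the outer $L_y$, and the product $\diamond$ of two elements of $y\mathcal{A}$ again lies in $y\mathcal{A}$ by \eqref{diamond}. Since $\phi(y\mathcal{A})\subseteq y\mathcal{A}$, we obtain $\phi(F_{f'})\in y\mathcal{A}$ whenever $f'\ne\I$, while $\phi(F_{\I})=\phi(1)=1\in\mathbb{Q}$. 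As the only Sweedler term with $f'=\I$ is $\I\otimes f$ (Proposition~\ref{cop}~(1)), this identifies
\[
 \sum_{(f)}\phi(F_{f'})\otimes\phi(G_{f''})
 =1\otimes\phi(G_f)+\sum_{\substack{(f)\\ f'\ne\I}}\phi(F_{f'})\otimes\phi(G_{f''})
\]
as precisely the decomposition of the element into its part with constant first tensor factor and its part with first tensor factor in $y\mathcal{A}$.

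Next I would apply the definitions termwise. By Lemma~\ref{uuu} the argument, lying in $\mathcal{B}$, is a $\mathbb{Q}$-linear combination of terms $u(W)$, so the formulas defining $p$ and $q$ apply and their values depend only on the decomposition above. On the part with first factor in $y\mathcal{A}$ the map $p$ acts by $yxL_y^{-1}$ on the first factor, whereas on a constant part $1\otimes c$ it produces $1\otimes cx$; the map $q$ left-multiplies every first factor by $y$ and adds the correction $-1\otimes cz$ coming from the constant part $1\otimes c$. Substituting $c=\phi(G_f)$ and the two families of terms produces exactly the two asserted identities, and the memberships in $\mathcal{B}$ then follow at once from Lemma~\ref{Im} together with the hypothesis.

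The step I expect to be the only real point is the alignment used in the second paragraph: that the Sweedler term with $f'=\I$ is the unique summand on which $p$ and $q$ invoke their ``constant first factor'' branch. This is exactly where the no-constant-term property of $F_{f'}$ for $f'\ne\I$ is needed, for otherwise the appending of $x$ (respectively $z$) would contaminate terms with $f'\ne\I$ and the clean formulas would fail. Granting this identification, the rest is a direct reading of the definitions of $p$ and $q$.
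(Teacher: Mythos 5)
Your proof is correct and takes essentially the same route the paper intends: the paper states Lemma \ref{pqinA} without proof, declaring it immediate from Lemmas \ref{uuu} and \ref{Im} and the definitions, and your argument supplies exactly the details that make it immediate. In particular, your key observation---that $F_{f'}\in y\mathcal{A}$ for every forest $f'\ne\I$ (so $\phi(F_{f'})\in y\mathcal{A}$, while $\phi(F_{\I})=1$), whence the Sweedler term $\I\otimes f$ is the unique summand triggering the constant-first-factor branch in the definitions of $p$ and $q$---is precisely the point the paper glosses over, and it also settles the implicit well-definedness of $p$ and $q$ on elements of $\mathcal{B}$ via the decomposition $\mathcal{A}^1=\mathbb{Q}\oplus y\mathcal{A}$ in the first tensor factor.
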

\begin{prop} \label{abc12345}
 For any forest $f\ne\I$, we have
 \[
  \sum_{(f)} \phi(F_{f'}) \otimes \phi(G_{f''}) \in \mathcal{B}.
 \] 
\end{prop}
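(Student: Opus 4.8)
The plan is to induct on $\deg(f)$, the number of vertices of the forest $f$, splitting the inductive step according to whether $f$ is a product of two nonempty forests or a single tree. The base case is $\deg(f)=1$, i.e.\ $f=\fo$. Here $\Delta(\fo)=\I\otimes\fo+\fo\otimes\I$, $F_{\fo}=y$, $G_{\fo}=-y$, so using $\phi(y)=-y$ the sum becomes
\[
 \phi(F_{\I})\otimes\phi(G_{\fo})+\phi(F_{\fo})\otimes\phi(G_{\I})
 =1\otimes y-y\otimes 1=u_y\in\mathcal{B}.
\]

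For the product case, suppose $f=gh$ with $g,h\ne\I$. By the multiplicativity of $\Delta$, rule (4) in the definitions of $F$ and $G$, and the identity $\phi(w_1\diamond w_2)=\phi(w_1)\ast\phi(w_2)$ (which follows from $w_1\diamond w_2=\phi(\phi(w_1)\ast\phi(w_2))$ together with $\phi^2=\id$), I would rewrite each $\phi(F_{g'h'})=\phi(F_{g'})\ast\phi(F_{h'})$ and $\phi(G_{g''h''})=\phi(G_{g''})\ast\phi(G_{h''})$. Since the product on $\mathcal{A}^\ast\otimes\mathcal{A}^\ast$ is componentwise, this factors the sum as
\[
 \sum_{(f)}\phi(F_{f'})\otimes\phi(G_{f''})
 =\Bigl(\sum_{(g)}\phi(F_{g'})\otimes\phi(G_{g''})\Bigr)\ast\Bigl(\sum_{(h)}\phi(F_{h'})\otimes\phi(G_{h''})\Bigr).
\]
Both factors lie in $\mathcal{B}$ by the induction hypothesis, and $\mathcal{B}$ is a $\mathbb{Q}$-subalgebra, so the product does too.

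The tree case is the main obstacle. Let $f=t=B_+(g)$ with $\deg(t)\ge2$, so $g\ne\I$, and put $X=\sum_{(g)}\phi(F_{g'})\otimes\phi(G_{g''})\in\mathcal{B}$ by induction. Using $\Delta(t)=\I\otimes t+(B_+\otimes\id)\Delta(g)$, I would split off the boundary term $t'=\I$ (contributing $1\otimes\phi(G_t)$) and, inside $(B_+\otimes\id)\Delta(g)$, the term $g'=\I$ (contributing $\phi(F_{\fo})\otimes\phi(G_g)=-y\otimes\phi(G_g)$). The remaining terms carry $F_{B_+(g')}=L(F_{g'})$ with $L=L_yL_{x+2y}L_y^{-1}$, and the key computation is
\[
 \phi\bigl(L(F_{g'})\bigr)=yxL_y^{-1}\phi(F_{g'})-y\phi(F_{g'}),
\]
obtained by writing $F_{g'}=y\,L_y^{-1}F_{g'}$ and pushing $\phi$ through, noting $\phi(v)=-L_y^{-1}\phi(F_{g'})$ for $v=L_y^{-1}F_{g'}$. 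Combined with $\phi(G_t)=R_{2x+y}(\phi(G_g))$ (using $\phi(2x+y)=2x+y$), and then substituting the explicit formulas for $p(X)$ and $q(X)$ from Lemma~\ref{pqinA}, I expect the stray boundary contributions to cancel: the $y\otimes\phi(G_g)$ terms cancel directly, and the $1\otimes(\cdots)$ terms collapse via $\phi(G_g)(x+y-z)=0$ since $z=x+y$. This should leave exactly
\[
 \sum_{(t)}\phi(F_{t'})\otimes\phi(G_{t''})=p(X)-q(X).
\]
Since $X\in\mathcal{B}$, Lemma~\ref{Im} gives $p(X),q(X)\in\mathcal{B}$, hence the difference lies in $\mathcal{B}$ and the induction closes.

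The delicate part is entirely the bookkeeping in the tree case: tracking the two boundary terms $t'=\I$ and $g'=\I$, computing the conjugate $\phi\circ L$ of the grafting operator correctly, and checking that after inserting the $p$- and $q$-formulas every term outside $p(X)-q(X)$ cancels. Once the $\diamond$-to-$\ast$ intertwining through $\phi$ is established, the product case and the base case are routine.
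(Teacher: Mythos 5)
Your proposal is correct and follows essentially the same route as the paper's proof: induction on $\deg(f)$, factoring the product case through the componentwise $\ast$-product on $\mathcal{B}$ via $\phi(w_1\diamond w_2)=\phi(w_1)\ast\phi(w_2)$, and rewriting the tree-case sum as $(p-q)(X)$ for $X=\sum_{(g)}\phi(F_{g'})\otimes\phi(G_{g''})$ before invoking Lemmas \ref{pqinA} and \ref{Im}. Your conjugation formula $\phi(L(F_{g'}))=yxL_y^{-1}\phi(F_{g'})-y\phi(F_{g'})$ agrees with the paper's $(yzL_y^{-1}-2y)\phi(F_{g'})$ since $\phi(F_{g'})\in y\mathcal{A}$, and your boundary-term bookkeeping (the identity $x+z=2x+y$ collapsing the $1\otimes\phi(G_g)(\cdots)$ terms) matches the paper's computation exactly.
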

\begin{proof}
 We prove the proposition by induction on $\deg(f)$. 
 When $\deg(f)=1$, we easily see the statement holds. 
 Assume $\deg(f)\ge2$. 
 If $f=gh\,(g,h\ne\I)$, 
 since $\phi(F_{g} \diamond F_{h}) =\phi(F_{g}) \ast \phi(F_{h})$,
 we have
 \begin{align*}
  \sum_{(f)} \phi(F_{f'}) \otimes \phi(G_{f''}) 
  &=\sum_{\substack{ (g) \\ (h) }} \phi(F_{g'} \diamond F_{h'}) \otimes \phi(F_{g''} \diamond F_{h''}) \\
  &=\sum_{(g)} \sum_{(h)} \left( \phi(F_{g'}) \otimes \phi(G_{g''}) \right) \ast
   \left( \phi(F_{h'}) \otimes \phi(G_{h''}) \right). 
 \end{align*}
 By the induction hypothesis, we find the result. 
 
 If $f$ is a tree, we put $f=B_{+}(g)$.
 Since
 \[
  \Delta(f)=\I\otimes f +(B_{+}\otimes \id)\Delta(g),
 \]
 we have
 \begin{align*}
  &\sum_{(f)} \phi(F_{f'}) \otimes \phi(G_{f''}) \\
  &=\phi(F_{\I}) \otimes \phi(G_{B_{+}(g)}) +\sum_{(g)} \phi(F_{B_{+}(g')}) \otimes \phi(G_{g''}) \\
  &=\phi(F_{\I}) \otimes \phi(G_{g}(2x+y)) +\sum_{\substack{ (g) \\ g'\ne\I }} \phi((yxL_{y}^{-1}+2y) F_{g'}) \otimes \phi(G_{g''}) 
   +\phi(yF_{\I}) \otimes \phi(G_{g}) \\
  &=\sum_{\substack{ (g) \\ g'\ne\I }} (yzL_{y}^{-1}-2y)\phi(F_{g'}) \otimes \phi(G_{g''}) 
   -y\phi(F_{\I}) \otimes \phi(G_{g}) +\phi(F_{\I}) \otimes \phi(G_{g})(x+z).
 \end{align*}
 Then we get
 \begin{align*}
  \sum_{(f)} \phi(F_{f'}) \otimes \phi(G_{f''})
  =(p-q) \biggl(\sum_{(g)} \phi(F_{g'}) \otimes \phi(G_{g''}) \biggr).
 \end{align*}
 By the induction hypothesis, we have $\sum_{(g)} \phi(F_{g'}) \otimes \phi(G_{g''})\in\mathcal{B}$.  
 Then, by Lemma \ref{pqinA}, we find the result. 
\end{proof}

Let $\Aug =\bigoplus_{n\ge1} \mathcal{H}_n$ be the augmentation ideal, where $\mathcal{H}_n$ is the degree $n$ homogeneous part of $\mathcal{H}$.
We define the $\mathbb{Q}$-linear map $M\colon\mathcal{A}^1_{\ast}\otimes\mathcal{A}^1_{\ast}\to\mathcal{A}^1_{\ast}$ by $M(w_1\otimes w_2)=w_1\ast w_2$. 
Note that $M(w)=0$ for $w\in\mathcal{B}$ by \eqref{MA} in Subsection 2.3.
\begin{prop} \label{key_main2}
 For any $f \in\Aug$, we have
 \begin{align*}
  \sum_{(f)} F_{f'} \diamond G_{f''}=0. 
 \end{align*}
\end{prop}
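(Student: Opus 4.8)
The plan is to transport the identity from the $\diamond$-world into the $\ast$-world via the involution $\phi$, where all the necessary structural results have already been assembled. First I would reduce to a single forest: since the subscripts of $F$ and $G$ are extended linearly and $\Delta$ is linear, it suffices to establish the identity for one forest $f\neq\I$, which is exactly the hypothesis under which Proposition \ref{abc12345} is stated.

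The core of the argument rests on the relation $w_1 \diamond w_2 = \phi(\phi(w_1) \ast \phi(w_2))$ recorded in Subsection 2.3. Applying the involution $\phi$ to both sides and using $\phi^2=\id$ yields
\[
 \phi(w_1 \diamond w_2) = \phi(w_1) \ast \phi(w_2),
\]
valid because $\phi$ preserves $\mathcal{A}^1$ (indeed $\phi(1)=1$ and $\phi(yw)=-y\phi(w)\in y\mathcal{A}$), so the right-hand side makes sense. Since each $F_{f'}$ and $G_{f''}$ lies in $\mathcal{A}^1$, I would apply $\phi$ to the entire sum and expand termwise:
\[
 \phi\Bigl(\sum_{(f)} F_{f'} \diamond G_{f''}\Bigr)
 = \sum_{(f)} \phi(F_{f'}) \ast \phi(G_{f''})
 = M\Bigl(\sum_{(f)} \phi(F_{f'}) \otimes \phi(G_{f''})\Bigr),
\]
where $M$ is the multiplication map $w_1\otimes w_2\mapsto w_1\ast w_2$.

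Now Proposition \ref{abc12345} asserts precisely that the argument $\sum_{(f)} \phi(F_{f'}) \otimes \phi(G_{f''})$ lies in the subalgebra $\mathcal{B}$. By the remark immediately following the definition of $M$, namely that $M$ vanishes on $\mathcal{B}$ (which is the content of relation \eqref{MA}), the displayed right-hand side equals $0$. Hence $\phi\bigl(\sum_{(f)} F_{f'} \diamond G_{f''}\bigr)=0$, and since $\phi$ is an involution, hence injective, the sum itself must vanish, giving the claim.

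I do not expect a genuine obstacle at this stage: the substantive work is already packaged into Proposition \ref{abc12345} (membership in $\mathcal{B}$) and into the vanishing of $M$ on $\mathcal{B}$ via the harmonic relation \eqref{MA}. The only points deserving a line of care are the linearity reduction to a single forest and the bookkeeping check that $\phi$ converts the $\diamond$-identity into an $\ast$-identity with both tensor factors remaining inside $\mathcal{A}^1$.
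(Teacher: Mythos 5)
Your proposal is correct and follows essentially the same route as the paper's own proof: apply $\phi$ to convert the $\diamond$-identity into a harmonic ($\ast$) identity, invoke Proposition \ref{abc12345} to place $\sum_{(f)} \phi(F_{f'}) \otimes \phi(G_{f''})$ in $\mathcal{B}$, and conclude via the vanishing of $M$ on $\mathcal{B}$ coming from \eqref{MA}, finishing with the injectivity of the involution $\phi$. The only differences are expository (your explicit linearity reduction and the remark that $\phi$ preserves $\mathcal{A}^1$), which the paper leaves implicit.
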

\begin{proof}
 We note that $\phi(w_1) \ast \phi(w_2)=\phi(w_1\diamond w_2)$ holds for $w_1,w_2\in\mathcal{A}$. 
 By Proposition \ref{abc12345}, we have 
 \[
  0=\sum_{(f)} M(\phi(F_{f'}) \otimes \phi(G_{f''}))
  =\sum_{(f)} \phi(F_{f'} \diamond G_{f''}).
 \] 
 Then we find the result. 
\end{proof}

\begin{proof}[Proof of Theorem \ref{main2}]
 We prove the theorem by induction on $\deg(f)$. 
 It is easy to see the theorem holds if $\deg(f)=1$. 
 Suppose $\deg(f)\ge2$. 
 If $f=gh\,(g,h\ne\I)$, we have
 \begin{align*}
  \widetilde{S(f)}(wx)
  &=\widetilde{S(gh)}(wx) =\widetilde{S(g)}((G_{h} \diamond w)x) \\
  &=(G_{g} \diamond (G_{h} \diamond w))x \\
  &=((G_{g} \diamond G_{h}) \diamond w)x \\
  &=(G_{f} \diamond w)x.
 \end{align*}
 %
 If $f=t$ is a tree, by Proposition \ref{key_main2}, Theorem \ref{main1}, and the induction hypothesis, we have
 \begin{align*} 
  (G_{t}\diamond w)x
  &=-\sum_{t'\subset t} ((F_{t'} \diamond G_{t \setminus t'})\diamond w)x \\
  &=-\sum_{t'\subset t} (F_{t'} \diamond (G_{t \setminus t'} \diamond w))x \\
  &=-\sum_{t'\subset t} \tilde{t'}((G_{t \setminus t'} \diamond w)x) \\
  &=-\sum_{t'\subset t} \tilde{t'} \widetilde{S(t \setminus t')} (wx). 
 \end{align*}
 Since $\widetilde{S(t)}+\sum_{t'\subset t} \tilde{t'} \widetilde{S(t \setminus t')}=0$ by Proposition \ref{cop} (2), we have
 \[ 
  (G_{t}\diamond w)x
  =\widetilde{S(t)}(wx). \qedhere
 \]
\end{proof}

\section{Proof of Theorem \ref{main3}} 
\begin{proof}[Proof of Theorem \ref{main3}]
 First, we prove the theorem when $w\in y\mathcal{A}x$. 
 Put $w=yw'x$.
 By Theorem \ref{main2} and Corollary \ref{cor}, we have
 \begin{align*}
  \widetilde{S(f)}(w)
  =(F_{S(f)} \diamond yw')x.
 \end{align*}
 We also have
 \begin{align*}
  \tau\tilde{f}\tau(w)
  &=\tau\tilde{f}(y\tau(w')x) \\
  &=\tau ((F_{f} \diamond y\tau(w'))x) \qquad \textrm{(by Theorem \ref{main1})} \\
  &=-\tau ((y\tau L_y^{-1}(F_{S(f)}) \diamond y\tau(w'))x) \qquad \textrm{(by Proposition \ref{aaaaaaa})} \\
  &=-y\tau (y\tau L_y^{-1}(F_{S(f)}) \diamond y\tau(w')) \\
  &=(F_{S(f)} \diamond yw')x \qquad \textrm{(by Lemma \ref{taurel})}.
 \end{align*}
 Thus we have
 \begin{align} \label{xyz}
  \widetilde{S(f)}(w)
  =\tau\tilde{f}\tau(w)
 \end{align}
 for $w\in y\mathcal{A}x$.
 
 Next, we prove the theorem when $w\in z\mathcal{A}x$ by induction on $\deg(w)$. 
 Put $w=zw'x$.
 Then, by Lemma \ref{z}, we have
 \begin{align*}
  \widetilde{S(f)}(w)
  &=z\widetilde{S(f)}(w'x), \\
  \tau\tilde{f}\tau(w)
  &=\tau\tilde{f}\tau(zw'x) 
  =z\tau\tilde{f}\tau(w'x).
 \end{align*}
 By \eqref{xyz} and the induction hypothesis, we have
 \begin{align} \label{xyyz}
  \widetilde{S(f)}(w'x)
  =\tau\tilde{f}\tau(w'x)
 \end{align} 
 for any $w'\in\mathcal{A}$, and hence the assertion. 
 
 Finally, we prove the theorem when $w\in \mathcal{A}z$ by induction on $\deg(w)$. 
 Put $w=w'z$.
 Then we have
 \begin{align*}
  \widetilde{S(f)}(w)
  &=(\widetilde{S(f)}(w'))z, \\
  \tau\tilde{f}\tau(w)
  &=\tau\tilde{f}\tau(w'z) 
  =(\tau\tilde{f}\tau(w'))z.
 \end{align*}
 By the induction hypothesis and \eqref{xyyz}, we have the assertion. 
 Therefore we have 
  $\widetilde{S(f)}(w) = \tau \tilde{f} \tau (w)$
 for any $w\in \mathcal{A}$. 
\end{proof}

\begin{prop} \label{aaaaaaa}
For $f\in\Aug$, we have
 \[
  F_f=-y\tau L_{y}^{-1} F_{S(f)}. 
 \]
\end{prop}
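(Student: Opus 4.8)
The plan is to first invoke Corollary \ref{cor} to replace $F_{S(f)}$ by $G_f$, so that the assertion becomes
\[
 F_f=-y\tau L_y^{-1}(G_f)\qquad(f\in\Aug).
\]
Since $F_g,G_g\in y\mathcal{A}$ for every $g\in\Aug$, the operator $\tau':=-y\tau L_y^{-1}$ is well defined on $y\mathcal{A}$, and the goal reads $F_f=\tau'(G_f)$. I would prove this by induction on $\deg(f)$ along the recursive clauses defining $F$ and $G$. The base case $f=\fo$ is immediate, since $\tau'(-y)=-y\,\tau L_y^{-1}(-y)=-y\,\tau(-1)=y=F_{\fo}$.

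For the tree step, let $t=B_+(g)$ with $g\ne\I$, so that $F_t=L_yL_{x+2y}L_y^{-1}(F_g)$ while $G_t=R_{2x+y}(G_g)$. Everything reduces to the intertwining identity
\[
 \tau'\bigl(R_{2x+y}(w)\bigr)=L_yL_{x+2y}L_y^{-1}\bigl(\tau'(w)\bigr)\qquad(w\in y\mathcal{A}),
\]
which is a one-line check: writing $w=yv$ and using that $\tau$ is an anti-automorphism with $\tau(2x+y)=x+2y$, both sides equal $-y(x+2y)\tau(v)$. Combined with the induction hypothesis $F_g=\tau'(G_g)$ this gives $F_t=L_yL_{x+2y}L_y^{-1}(\tau'(G_g))=\tau'(R_{2x+y}(G_g))=\tau'(G_t)$.

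The product step is the main obstacle. For $f=gh$ with $g,h\ne\I$ we have $F_f=F_g\diamond F_h$ and $G_f=G_g\diamond G_h$, so by the induction hypothesis it suffices to prove that $\tau'$ is multiplicative for $\diamond$ on $y\mathcal{A}$, i.e.
\[
 \tau'(w_1\diamond w_2)=\tau'(w_1)\diamond\tau'(w_2)\qquad(w_1,w_2\in y\mathcal{A}).
\]
A direct induction on the recursion \eqref{diamond} does not close, because \eqref{diamond} mixes $x$- and $y$-leading words whereas $\tau'$ lives only on $y\mathcal{A}$. My preferred route is to transport the identity to the harmonic product through $\phi$: using $w_1\diamond w_2=\phi(\phi(w_1)\ast\phi(w_2))$ and setting $\theta:=\phi\tau'\phi$, one checks that the displayed identity is equivalent to $\theta$ being a $\ast$-homomorphism.

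The crucial computation is then to identify $\theta$ explicitly. Using $\phi(x)=z$, $\phi(z)=x$, $\phi(y)=-y$ together with $\tau(z)=z$, one finds
\[
 \theta(yx^{k_1-1}\cdots yx^{k_r-1})=(-1)^{r}\,yx^{k_r-1}zx^{k_{r-1}-1}\cdots zx^{k_1-1},
\]
that is, $\theta=d\rho\,\nu$, where $\nu$ is the sign-by-depth map $\nu(w)=(-1)^{\dep(w)}w$ and $d\rho$ is the operator appearing in \eqref{963}. Now $\theta$ is visibly a composite of three homomorphisms: $\nu$ carries $(\mathcal{A}^1,\ast)$ to $(\mathcal{A}^1,\harub)$, because $\ast$ and $\harub$ differ only by the sign of the contraction term, so that $\nu(w_1\ast w_2)=\nu(w_1)\harub\nu(w_2)$; next $\rho$ is a $\harub$-homomorphism (the identity $\rho(w_1\harub w_2)=\rho(w_1)\harub\rho(w_2)$ used in the proof of Lemma \ref{uuu}); and finally $d$ carries $(\mathcal{A}^1,\harub)$ to $(\mathcal{A}^1,\ast)$ by Lemma \ref{muneta}. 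Hence $\theta=d\rho\nu$ is a $\ast$-homomorphism, which yields the multiplicativity of $\tau'$ for $\diamond$ and completes the product step, and with it the induction. The only genuinely delicate point is the reformulation of the product step as a homomorphism statement and the factorization $\theta=d\rho\nu$; once this is in place the result follows from the homomorphism lemmas already established in Sections 2 and 4.
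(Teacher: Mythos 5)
Your proposal is correct and follows essentially the same route as the paper: your operator $\tau'=-y\tau L_y^{-1}$ agrees on $y\mathcal{A}$ with the paper's $-R_x^{-1}\tau R_x$, so your $\diamond$-multiplicativity claim is exactly Lemma \ref{taurel}, and your factorization $\theta=d\rho\nu$ (note $\nu$ is just the paper's $\sigma$ restricted to $\mathcal{A}^1$) is precisely the paper's identity \eqref{123456}, $-\phi R_x^{-1}\tau R_x\phi=d\rho\sigma$, proved there by the same chain of homomorphism facts ($\nu=\sigma$ turning $\ast$ into $\harub$, $\rho$ a $\harub$-homomorphism, and Lemma \ref{muneta} for $d$). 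The induction skeleton (base case $\fo$, tree case via the direct $R_{2x+y}$ versus $L_yL_{x+2y}L_y^{-1}$ intertwining computation, product case via Corollary \ref{cor} and the homomorphism property) likewise matches the paper's proof.
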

\begin{proof}
 It is sufficient to prove the proposition for forests $f$ by induction on $\deg(f)$. 
 Since 
 $F_{
  \,\begin{xy}
   {(0,0) \ar @{{*}-{*}} (0,0)}
  \end{xy}\,\,}
  =y$ and 
 $F_{S(
  \,\begin{xy}
   {(0,-0.3) \ar @{{*}-{*}} (0,-0.3)}
  \end{xy}\,\,)}
  =-y$, 
 the proposition hols for $\deg(f)=1$. 
 Suppose $\deg(f)\ge2$. 
 If $f=gh\,(g,h\ne\I)$, we have
 \begin{align*}
  F_f
  &=F_g \diamond F_h \\
  &=y\tau L_y^{-1}(G_g) \diamond y\tau L_y^{-1}(G_h) \qquad \textrm{(by induction and Corollary \ref{cor})} \\
  &=-R_x^{-1} \tau ((G_g \diamond G_h)x) \qquad \textrm{(by Lemma \ref{taurel} below)}
 \end{align*}
 and
 \begin{align*}
  y\tau L_y^{-1} G_f
  =y\tau L_y^{-1} (G_g \diamond G_h) 
  =R_x^{-1} \tau ((G_g \diamond G_h)x).
 \end{align*}
 Thus we have the result. 
 %
 If $f$ is a tree, put $f=B_+(g)$.
 Then we have
 \begin{align*} 
  F_f
  &=L(F_g) \\
  &=-L(y \tau L_{y}^{-1} G_g)  \qquad \textrm{(by induction and Corollary \ref{cor})} \\
  &=-y(x+2y) R_x^{-1} \tau (G_g)
 \end{align*}
 and
 \begin{align*} 
  -y\tau L_{y}^{-1} G_f 
  =-y\tau L_{y}^{-1} R_{2x+y} (G_g) 
  =-y(x+2y) R_x^{-1} \tau (G_g). 
 \end{align*}
 This finishes the proof. 
\end{proof}

Now we define $\sigma\in\Aut(\mathcal{A})$ such that $\sigma(x)=x$ and $\sigma(y)=-y$. 
By definitions, we have
\begin{align} \label{123456}
 -\phi R_{x}^{-1}\tau R_{x}\phi=d\rho\sigma. 
\end{align}
We find that $d, \sigma$, and $\rho$ are homomorphisms with respect to the harmonic product $\ast$, and $\rho$ commutes with $\sigma$. 
Hence the composition $d\rho\sigma$ is also a homomorphism with respect to the harmonic product $\ast$, and so is $-\phi R_x^{-1} \tau R_x \phi$ because of \eqref{123456}. 
This implies the composition $-R_x^{-1} \tau R_x$ is a homomorphism with respect to the product $\diamond$ (defined in Section 2) and hence we conclude the following lemma. 
\begin{lem} \label{taurel}
 For $w_1,w_2\in\mathcal{A}$, we have
 \[
  (yw_1 \diamond yw_2)x +y\tau(y\tau(w_1) \diamond y\tau(w_2))=0. 
 \]
\end{lem}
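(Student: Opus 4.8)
The plan is to reduce the lemma to the already-established identity \eqref{123456}, which packages all the hard combinatorics of the harmonic product into a single operator equation. The statement to prove, namely $(yw_1 \diamond yw_2)x + y\tau(y\tau(w_1) \diamond y\tau(w_2)) = 0$, should be recognized as the assertion that the composite $-R_x^{-1}\tau R_x$ is a homomorphism for the product $\diamond$. Indeed, applying $-R_x^{-1}\tau R_x$ to $yw_1 \diamond yw_2$ gives $-R_x^{-1}\tau((yw_1 \diamond yw_2)x)$, and I want to show this equals the $\diamond$-product of the images $-R_x^{-1}\tau R_x(yw_1)$ and $-R_x^{-1}\tau R_x(yw_2)$. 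So the first step is to make this translation precise: rewrite the desired identity in the form $-R_x^{-1}\tau R_x(u \diamond v) = (-R_x^{-1}\tau R_x\, u)\diamond(-R_x^{-1}\tau R_x\, v)$ for $u,v$ of the shape $yw_i$, and check that $-R_x^{-1}\tau R_x(yw_i) = y\tau(w_i)$ so that the target matches the lemma verbatim after applying $R_x$ and negating.

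Next I would invoke the chain of homomorphism properties spelled out in the paragraph preceding the lemma. The maps $d$, $\sigma$, and $\rho$ are each homomorphisms for $\ast$, and $\rho$ commutes with $\sigma$, so the composite $d\rho\sigma$ is a $\ast$-homomorphism. By \eqref{123456} we have $-\phi R_x^{-1}\tau R_x\phi = d\rho\sigma$, so $-\phi R_x^{-1}\tau R_x\phi$ is a $\ast$-homomorphism as well. The final conceptual step is to conjugate by the involution $\phi$: since $w_1 \diamond w_2 = \phi(\phi(w_1)\ast \phi(w_2))$ (recorded just after the definition of $\diamond$), an operator $T$ is a $\ast$-homomorphism if and only if $\phi T \phi$ is a $\diamond$-homomorphism. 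Applying this with $T = -\phi R_x^{-1}\tau R_x \phi$ and using $\phi^2 = \id$, I conclude that $\phi(-\phi R_x^{-1}\tau R_x\phi)\phi = -R_x^{-1}\tau R_x$ is a homomorphism for $\diamond$, which is exactly the claim.

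The remaining verification is purely formal bookkeeping: I would expand what ``homomorphism for $\diamond$'' says on the elements $yw_1$ and $yw_2$, compute $R_x^{-1}\tau R_x(yw_i) = R_x^{-1}\tau(yw_i x) = R_x^{-1}(x\,\tau(w_i)\,y) = x\,\tau(w_i)$ — wait, one must instead track that $\tau$ is an anti-automorphism so $\tau(yw_ix)$ begins with $\tau(x)=y$, giving $-R_x^{-1}\tau R_x(yw_i)=y\tau(w_i)$ after the sign — and then multiply through by $R_x$ and rearrange signs to land on the displayed identity. This is where I expect the only real friction: keeping the anti-automorphism reversal of $\tau$, the two applications of $R_x^{\pm1}$, and the overall sign consistent, since a single misplaced sign or a forgotten order-reversal would break the cancellation.

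The main obstacle, then, is not the harmonic-product combinatorics — that is entirely absorbed into \eqref{123456} and the already-stated homomorphism properties of $d,\rho,\sigma$ — but rather the clean transfer between the $\ast$-world and the $\diamond$-world via $\phi$, together with the careful verification that the abstract homomorphism property, once specialized to arguments $yw_1$ and $yw_2$ and stripped of the conjugating operators, reproduces the exact shape $(yw_1 \diamond yw_2)x + y\tau(y\tau(w_1)\diamond y\tau(w_2)) = 0$. I would present the argument in the order: (i) record that $-R_x^{-1}\tau R_x$ is a $\diamond$-homomorphism via $\phi$-conjugation of \eqref{123456}; (ii) evaluate both sides of the homomorphism identity on $yw_1, yw_2$; (iii) simplify using the anti-automorphism property of $\tau$ and the action of $R_x^{\pm 1}$ to recover the stated formula.
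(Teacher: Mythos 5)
Your proposal is correct and is essentially the paper's own proof: the paper likewise establishes that $-R_x^{-1}\tau R_x$ is a $\diamond$-homomorphism by $\phi$-conjugating the $\ast$-homomorphism $d\rho\sigma = -\phi R_x^{-1}\tau R_x\phi$ of \eqref{123456}, and then evaluates it on $yw_1, yw_2$ (the paper writes $yw_i = R_x^{-1}\tau R_x\,\tau(w_ix)$, which is your step (iii) read in the other direction). One small sign slip in your final bookkeeping: $-R_x^{-1}\tau R_x(yw_i) = -y\tau(w_i)$, not $+y\tau(w_i)$, but since the map is applied to both factors the two signs cancel and the stated identity follows unchanged.
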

\begin{proof} 
 We have
 \begin{align*}
  yw_1\diamond yw_2 
  &=R_{x}^{-1}L_{y}(w_1x) \diamond R_{x}^{-1}L_{y}(w_2x) \\
  &=R_{x}^{-1}\tau R_{x}\tau(w_1x) \diamond R_{x}^{-1}\tau R_{x}\tau(w_2x) \\
  &=-R_{x}^{-1}\tau R_{x} (w_1x\diamond w_2x).  
 \end{align*}
 This gives the lemma. 
\end{proof}

\begin{rem}
 According to \cite{BT20}, for any $w\in y\mathcal{A}x$, there exists $\tilde{f}\in\widetilde{\mathcal{H}}$ such that $w=\tilde{f}(x)$. 
 Hence we have $(1-\tau)(w)=(1-\tau)(\tilde{f}(x))=(\tilde{f}+\tau \tilde{f}\tau)(x)=(\tilde{f}+\widetilde{S(f)})(x)$ due to Theorem \ref{main3}, which means each of the duality formulas for multiple zeta values also appears in this form in the context of RTMs. 
\end{rem}

\section*{Acknowledgement}
The second author is partially supported by JSPS KAKENHI Grant Number (C) 19K03434.


\end{document}